\newtheorem{theorem}{Theorem}
\newtheorem{corollary}[theorem]{Corollary}
\newtheorem{lemma}[theorem]{Lemma}
\newtheorem{remark}[theorem]{Remark}
\newtheorem{proposition}[theorem]{Proposition}
\newtheorem{example}[theorem]{Example}
\theoremstyle{remark}
\newcommand{\old}[1]{{{}}}
\def\Re{\rm{Re}}
\def\x{\bf {x}}
\newcommand{\Eq}[1]{(\ref{#1})}
\def\Thanks#1{\gdef\thefootnote{\arabic{footnote}}\thanks{#1}}
\def\ThanksComma#1{\gdef\thefootnote{\arabic{footnote},}\thanks{#1}{
}}
\begin{document}
\newenvironment{Proof}[1]{\par\noindent{\bf Proof{#1}:}\quad}{} 

\title{Maximum of Exponential Random Variables, Hurwitz's Zeta Function, and the Partition Function}

\author{D. Barak-Pelleg\Thanks{
Department of Mathematics, Ben-Gurion
University, Beer Sheva 84105, Israel.
E-mail: dinabar@post.bgu.ac.il}
 \and  D.~Berend\ThanksComma{
Departments of Mathematics and Computer Science, Ben-Gurion
University, Beer Sheva 84105, Israel.
E-mail: berend@math.bgu.ac.il}
\Thanks{Research supported in part by the Milken
Families Foundation Chair in Mathematics.}
\and
G.~Kolesnik\ThanksComma{
Department of Mathematics, California State University, Los
Angeles,
CA
90032, USA. E-mail: gkolesnik@sbcglobal.net}
\Thanks{Research supported in part by the Center for
Advanced Studies
in Mathematics at Ben-Gurion University.}}

\date{}
\maketitle

\begin{abstract}
A natural problem in the context of the coupon collector's problem is the behavior of the maximum of independent geometrically distributed random variables (with distinct parameters). This question has been addressed  by Brennan et al.\ (British J. of Math. \& CS.\:8 (2015), 330--336). Here we provide explicit asymptotic expressions for the moments of that maximum, as well as of the maximum of exponential random variables with corresponding parameters. We also deal with the probability of each of the variables being the maximal one.

The calculations lead to expressions involving Hurwitz's zeta function at certain special points. We find here explicitly the values of the function at these points. Also, the distribution function of the maximum we deal with is closely related to the generating function of the partition function. Thus, our results (and proofs) rely on classical results pertaining to the partition function.
	
	
	\vskip0.5em\noindent\textit{Keywords and phrases}:
Coupon collector problem, Hurwitz's zeta function, partition function.

\vskip0.5em\noindent 2000 \textit{Mathematics
Subject Classification}.
Primary 60C05, 11M35; Secondary 11P82.
\end{abstract}

\section{Introduction}
Suppose that a company distributes a commercial product and that each package contains a single coupon. There are $n$ types of coupons, and a customer wants to collect at least one of each. 
We want to know how many packages need to be bought on the average until getting all coupons. This is referred to as the $\textit{coupon collector problem}$. The problem goes back at least as far as de Moivre \citep{Moiver}, who mentioned it in a collection of problems regarding various games of chance.

The expected number of drawings is calculated in a straightforward manner. (Note, though, that if one does not take the right approach, the problem may become quite intricate; see \citep{maunsell}.)  After exactly~$j$ distinct coupons have been seen, the probability of drawing an as yet unseen coupon is $\tfrac{n-j}{n}$. Hence, the number $D_{j}$ of drawings until we see such a new coupon is $G(1-\tfrac{j}{n})$-distributed. The total number of drawings is the sum of all these $D_j$'s. The expected number of drawings is therefore
$nH_n$,
where $H_n$ is the $n$-th harmonic number:
$$H_n=1+\frac{1}{2}+\frac{1}{3}+\cdots +\frac{1}{n}, \qquad n=1,2,3,\ldots.$$
Asymptotically, this expectation is $n(\ln n+\gamma)+O(1)$, where $\gamma = 0.577\ldots$  is the Euler-Mascheroni constant.

The problem, and various extensions thereof, have drawn much attention for many years. Laplace \citep{Laplace}, and also  Erd\H{o}s and  R\'{e}nyi \citep{ErdosRenyi1961}, found the asymptotic distribution of the number of drawings.  Newman and  Shepp \citep{NewmanShepp} considered yet another generalization of the problem, when one wants to collect at least $m$ copies of each coupon. They calculated the asymptotic expected number of required drawings, and Erd\H{o}s and  R\'{e}nyi \citep{ErdosRenyi1961} found the limiting distribution of this quantity. Schelling \citep{VonSchelling1954,VonSchelling1934} and Flajolet et al.\ \citep{FlajoletCoupon} considered the case where various coupons may show up with distinct probabilities.  For other related questions, we refer to \citep{Foata,Holst,Kun,MyersWilf2006,Neal,HofriBoneh}. For more on the history of the problem, see \citep{CouponHistory}.

Our starting point here is Brennan et al.\ \citep{Brennan}. In that paper, the maximum waiting time was considered. That is, let $D_{(n)}$ be the maximum of the variables $D_0,D_1,\ldots,D_{n-1}$, defined above. In \citep{Brennan}, the  expectation, and indeed  all moments, of $D_{(n)}$ have been calculated asymptotically in terms of certain infinite series. For example,
\begin{equation}\label{IntroLabel1}
\begin{aligned}
 E(D_{(n)})&
 =n\sum_{j=1}^{\infty}(-1)^{j+1}\left(\frac{2}{j(3j-1)}+\frac{2}{j(3j+1)}\right)+o(n)
 \approx 1.255n,
\end{aligned}
\end{equation}
and similar formulas hold for all moments.

Our first result is an explicit expression for these infinite series in terms of the values of Hurwitz's zeta function (see, for example, \citep{Apostol,Hurwitz,Lund}) at certain special points. To understand these expressions better, we have calculated these values of  Hurwitz's zeta function in terms of the values of the Riemann zeta function at integer points, and eventually as rational polynomials in $\sqrt{3}$ and $\pi$. In fact, this issue has attracted quite some attention. (See, for example, \citep{CvijovicKlinowski1995,CvijovicKlinowski1999,DwilewiczMinavc2006}.) The values we need have been calculated in \citep{QinShangLi2013}, but here we develop less cumbersome expressions. (See Example \ref{cumbersome} below.)


Following the results of Brennan et al.\ \citep{Brennan}, one may ask: what is the probability of each of the waiting times $D_j$ to be the
longest? Obviously, the last waiting time,~$D_{n-1}$, has the best chance of being the maximal, the second last, $D_{n-2}$, has a smaller chance, etc. The fact that~$E(D_{(n)})\approx1.255n$, which is not much larger than~$E(D_{n-1})$($=n$), hints that the last waiting time has a non-negligible probability of being larger than all its predecessors. Here we find how the asymptotic probability of~$D_{n-m}$ being the maximal one decays as a function of $m$ for large $n$.

Brennan et al.\ \citep{Brennan} note that the limiting distribution of $D_{(n)} / n$ is the distribution of an infinite series of independent exponential random variables with parameters $1,2,3,\ldots$. In this vein, we consider in this paper a continuous version of a coupon collector also. In this version, the interarrival times between coupons are exponentially distributed instead of being fixed. This model has been considered by Boneh and Hofri in a different context \citep{HofriBoneh}.

In Section 2 we define the continuous analogue of the coupon collector problem, and then state our main results in both the discrete and the continuous settings. We also state our results regarding Hurwitz's zeta function. Section 3 presents the calculations leading to explicit expressions for the values of Hurwitz's zeta function at some special points, Section~4~-- the proof of the asymptotic results for the moments, and Section 5 -- the proof of the asymptotic results concerning  the identity of the maximal waiting time.

We would like to express our gratitude to J.-P. Allouche, A. Boneh, M. Hofri, J. Kariv, A. Meurman and A. Yadin for their many helpful comments on the first version of the paper.


\bigskip

\bigskip

\section{Main results}\label{sec:Results}

Recall that the number of coupons  one needs to get, after having already acquired $j$ distinct coupons, until gettting a new one, is a geometric variable with parameter $1-\tfrac{j}{n}$.  We may approximate  this geometric variable by an exponential variable with the same parameter. 
Thus, we define a continuous analogue for the coupon collector problem as follows. There are~$n$ types of coupons, arriving with independent $\textmd{Exp}(1)$-distributed interarrival times. Each coupon has the same probability $\tfrac{1}{n}$ of being of each of the types. The basic question is now about the expected time until all coupon types are obtained. Note that, in the continuous version (unlike the discrete one),  we may alternatively assume that there are $n$ independent flows of coupons, and the interarrival times of each type are $\textmd{Exp}(\tfrac{1}{n})$-distributed.
In the process, the collector gets new coupons $n$ times. Denote the times these new coupons arrive, for both the discrete model and the continuous model, by $T_1<T_2<\cdots <T_n$. We set~$T_0=0$. The waiting times $T_{j+1}-T_j$, $0\le j\le n-1$, between new coupons will be denoted by $D_j$ for the discrete model and by $W_j$ for the continuous model. Thus, $W_j\sim\textmd{Exp}(\tfrac{n-j}{n})$ and, as mentioned above, $D_j\sim G(\tfrac{n-j}{n})$,~ $0\le j\le n-1$.
The main quantity to have been studied in detail is the total time $T_n=\sum_{j=0}^{n-1}D_j$ until the collection is complete (under the discrete model). In this paper, though, our main interest is in  $D_{(n)}=\max_{0\le j\le n-1}D_j$ and $W_{(n)}=\max_{0\le j\le n-1}W_j$. 

We start with the continuous model. Put $X_i=W_{n-i}/n$, so that $X_i\sim\textmd{Exp}(i)$, $1\le i\le n$. 
We have~$W_{(n)}=n\cdot \max\lbrace{X_1,\ldots,X_n\rbrace}$. We want to understand the asymptotic behavior of~$W_{(n)}$. The advantage in passing to the variables $X_i$ is that we may once and for all take an infinite sequence $\left(X_i\right)_{i=1}^{\infty}$ of independent variables $X_i\sim\textmd{Exp}(i)$. Put $M=\max_{1\le i< \infty}X_i$.  (Note that, by Borel-Cantelli's Lemma, $M$ is well defined.) We will start with the moments of $M$.

\begin{theorem}\label{MeanVarOfMax}
The expectation and variance of the maximum waiting time are
$$
E(M)= \frac{4\sqrt{3}}{3}\pi-6\approx 1.255,
$$
and
$$
V(M)=-\frac{28}{3}\pi ^2-16\sqrt{3}\pi+180\approx 0.821,
$$
respectively.
\end{theorem}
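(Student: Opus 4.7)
My plan is to reduce both moments to integrals over the tail $1-F$ of $M$, and then exploit the fact that the product distribution function $F(x)=\prod_{i=1}^\infty(1-e^{-ix})$ is, up to a trivial factor, Euler's product evaluated at $q=e^{-x}$. First I would write
\[
E(M)=\int_0^\infty\bigl(1-F(x)\bigr)\,dx,\qquad E(M^2)=\int_0^\infty 2x\bigl(1-F(x)\bigr)\,dx,
\]
and apply Euler's pentagonal number theorem
\[
\prod_{i=1}^\infty(1-q^i)=1+\sum_{k=1}^\infty(-1)^k\bigl(q^{k(3k-1)/2}+q^{k(3k+1)/2}\bigr)
\]
with $q=e^{-x}$. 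Dominated convergence (the exponents grow quadratically in $k$, so the series converges absolutely uniformly on $[\delta,\infty)$, and near $0$ one argues via the alternating-series tail bound) justifies integrating term by term, yielding the Brennan-type formulas
\[
E(M)=\sum_{k=1}^\infty(-1)^{k+1}\!\left(\tfrac{2}{k(3k-1)}+\tfrac{2}{k(3k+1)}\right),\quad E(M^2)=8\sum_{k=1}^\infty(-1)^{k+1}\!\left(\tfrac{1}{k^2(3k-1)^2}+\tfrac{1}{k^2(3k+1)^2}\right).
\]

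For the expectation, partial fractions collapse the summand to $\tfrac{6}{3k-1}-\tfrac{6}{3k+1}$, so that
\[
E(M)=6\sum_{k=1}^\infty(-1)^{k+1}\!\left(\tfrac{1}{3k-1}-\tfrac{1}{3k+1}\right).
\]
I would evaluate this by writing $\tfrac{1}{3k\pm 1}=\int_0^1 t^{3k-2}\,dt$ (resp.\ $t^{3k}$), exchanging sum and integral, and summing the resulting geometric series in $-t^3$; after cancellation one is left with $\int_0^1\!\bigl(-1+\tfrac{1}{t^2-t+1}\bigr)dt$, which integrates via completing the square to $-1+\tfrac{2\pi}{3\sqrt 3}$. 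This gives $E(M)=\tfrac{4\sqrt 3}{3}\pi-6$. (Equivalently, the sum is a linear combination of Hurwitz zeta values $\zeta(1,\tfrac16),\zeta(1,\tfrac56),\zeta(1,\tfrac13),\zeta(1,\tfrac23)$ appropriately regularized, matching Section~3.)

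For the second moment, I would apply partial fractions to $\tfrac{1}{k^2(3k\pm 1)^2}$, obtaining coefficients on $\tfrac{1}{k},\tfrac{1}{k^2},\tfrac{1}{3k\pm1},\tfrac{1}{(3k\pm 1)^2}$. The $\tfrac{1}{k}$ pieces cancel between the $\pm$ terms; the $\tfrac{1}{k^2}$ piece contributes $\eta(2)=\tfrac{\pi^2}{12}$; the $\tfrac{1}{3k\pm 1}$ piece is handled by the computation above; and the genuinely new quantity is
\[
T=\sum_{k=1}^\infty(-1)^{k+1}\!\left(\tfrac{1}{(3k-1)^2}+\tfrac{1}{(3k+1)^2}\right),
\]
which is precisely the combination of Hurwitz zeta values at $s=2$, arguments $\tfrac16,\tfrac13,\tfrac23,\tfrac56$, whose closed form the paper establishes in Section 3. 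Alternatively, using $\tfrac{1}{(3k\pm 1)^2}=-\int_0^1 t^{3k-2\text{ or }3k}\log t\,dt$ and the identity $1-t+t^2=(1+t^3)/(1+t)$, one reduces $T$ to the classical dilogarithmic integrals $\int_0^1\tfrac{\log t}{1+t}dt$ and $\int_0^1\tfrac{\log(1+t^3)}{t}dt$, both equal to rational multiples of $\pi^2$, yielding $T=1-\tfrac{2\pi^2}{27}$. Assembling these ingredients and subtracting $E(M)^2=\tfrac{16\pi^2}{3}-16\sqrt 3\pi+36$ produces $V(M)=-\tfrac{28}{3}\pi^2-16\sqrt 3\pi+180$.

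The main obstacle is the evaluation of $T$ (equivalently, of the Hurwitz zeta values at denominator $6$ that it encodes): the first-order sum collapses telegraphically through an arctangent, but the second-order sum forces one either to invoke the explicit $\zeta(2,p/6)$ formulas derived in Section~3 or to carry out the dilogarithm reduction sketched above. Once $T$ is in hand, the variance computation is a bookkeeping exercise. Justifying the termwise integration near $x=0$ for $E(M^2)$ requires slightly more care than for $E(M)$ because of the extra factor of $x$, but this is routine since $1-F(x)=O(e^{-x})$ as $x\to\infty$ and $1-F(x)=O(1)$ as $x\to 0^+$.
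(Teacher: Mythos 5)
Your computation of $E(M)$ is essentially the paper's own proof: tail integral $E(M)=\int_0^\infty(1-F_M(t))\,dt$, the pentagonal number theorem, termwise integration, and the collapse to $6\sum_{k\ge1}(-1)^{k+1}\bigl(\tfrac1{3k-1}-\tfrac1{3k+1}\bigr)$, which you evaluate by reducing to $\int_0^1\bigl(-1+\tfrac{1}{1-t+t^2}\bigr)dt=\tfrac{2\pi}{3\sqrt3}-1$; the paper does the identical calculation by integrating the derivative of the generating function $f(x)=\sum\bigl(\tfrac{x^{3k-1}}{3k-1}-\tfrac{x^{3k+1}}{3k+1}\bigr)$ and evaluating $f(-1)$. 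Where you genuinely depart from the paper is the variance: the paper explicitly omits that computation, obtaining it only as the $k=2$ case of the general moment formula (Theorems \ref{explicit_k_th_moment} and \ref{explicit_k_th_momentFormula2}, via Lemma \ref{FractionDecomposition} and Section 3), whereas you carry out $k=2$ by hand. Your numbers check out: the partial fractions of $\tfrac{1}{m^2(3m\mp1)^2}$ do cancel the $1/m$ terms; the new sum $T=\sum(-1)^{m+1}\bigl(\tfrac{1}{(3m-1)^2}+\tfrac{1}{(3m+1)^2}\bigr)$ equals $\tfrac{1}{36}\bigl(\zeta(2,\tfrac13)+\zeta(2,\tfrac23)-\zeta(2,\tfrac16)-\zeta(2,\tfrac56)+36\bigr)=1-\tfrac{2\pi^2}{27}$; and assembling gives $E(M^2)=-4\pi^2-32\sqrt3\pi+216$ and hence the stated $V(M)$. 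A point worth making explicit: since $s=2$ is even, only the \emph{sums} $\zeta(2,\tfrac aq)+\zeta(2,1-\tfrac aq)$ are needed, and these follow from the elementary multiplication formula \Eq{eqVIII} alone — none of the odd-$s$ machinery of Theorem \ref{KolesnikTheorem1} is required; your dilogarithm alternative (using $\int_0^1\tfrac{\log t}{1+t}dt=-\tfrac{\pi^2}{12}$ and $\int_0^1\tfrac{\log(1+t^3)}{t}dt=\tfrac{\pi^2}{36}$) independently confirms $T=1-\tfrac{2\pi^2}{27}$. So your route buys a self-contained, fully elementary derivation of $V(M)$ that the paper only gets as a corollary of the general $k$-th moment theorem. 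The one loose phrase is the aside describing the first-order sum as Hurwitz zeta values at $s=1$ ``appropriately regularized'': $\zeta(s,a)$ has a pole at $s=1$, so that remark should either be dropped or replaced by the digamma-function formulation; it is not load-bearing.
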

The theorem is a special case of Theorem \ref{explicit_k_th_moment} (or Theorem \ref{explicit_k_th_momentFormula2}) below, which gives all moments of $M$. It will be instructive, though, to calculate $E(M)$ separately from $E(M^k)$. We will omit the calculation of $V(M)$.

To express the higher moments of $M$, we need Hurwitz's zeta function~$\zeta(s,a)$, defined by
\begin{equation}\label{def-hurwitz-zeta}
\zeta(s,a)=\sum_{m=0}^{\infty}\frac{1}{\left(m+a\right)^s},\qquad s,a\in\bf{C},\ \Re{}\, \mathit{s}>1,\ \Re\, \mathit{a}>0.
\end{equation}
For more information about Hurwitz's zeta function, we refer the reader to \citep{Apostol,Hurwitz,Lund,Oswald}.

\begin{theorem}\label{explicit_k_th_moment}
For  $k=1,2,\ldots$:
\begin{equation}\label{MometsOfMFormula1}
\begin{aligned}
    E(M^k)
    = k!(-1)^k\:&\left(-6^k \left(\frac{2\pi}{3\sqrt{3}}-1\right)\binom{2k-2}{k-1}\right.\\
    &\:+\sum_{j=1}^{\lfloor k/2\rfloor }2^{k+1}\binom{2k-2j-1}{k-1}3^{k-2j}\zeta(2j)\left(1-2^{1-2j}\right)\\
    &\:+\sum_{j=2}^{k}\binom{2k-j-1}{k-1}\\
    &\qquad \quad  \cdot\left. 6^{k-j}\Big((-1)^j\Big(\zeta(j,\tfrac{1}{3})-\zeta(j,\tfrac{5}{6})\Big)+\zeta(j,\tfrac{2}{3})-\zeta(j,\tfrac{1}{6})+6^j\Big)\right).
\end{aligned}
\end{equation}
\end{theorem}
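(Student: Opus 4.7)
The idea is to combine the tail-integral formula for the moments of a nonnegative variable with Euler's pentagonal number theorem, and then to reduce the resulting series to sums of the form $\sum(-1)^{j+1}/j^{i}$ and $\sum(-1)^{j+1}/(3j\pm 1)^{i}$ via partial fractions. By independence, the distribution function of $M$ is
\[
P(M\le t)=\prod_{i=1}^{\infty}(1-e^{-it}),
\]
which is Euler's function $\phi(q)=\prod_{i\ge 1}(1-q^{i})$ at $q=e^{-t}$. Euler's pentagonal number theorem yields
\[
1-P(M\le t)=\sum_{j=1}^{\infty}(-1)^{j+1}\Bigl(e^{-tj(3j-1)/2}+e^{-tj(3j+1)/2}\Bigr),
\]
and inserting this into $E(M^{k})=\int_{0}^{\infty}kt^{k-1}P(M>t)\,dt$ and interchanging sum and integral (justified by absolute convergence on $[\delta,\infty)$ and by the $j=1$ summand dominating near $t=0$) produces the master series
\[
E(M^{k})=k!\,2^{k}\sum_{j=1}^{\infty}(-1)^{j+1}\left(\frac{1}{\bigl(j(3j-1)\bigr)^{k}}+\frac{1}{\bigl(j(3j+1)\bigr)^{k}}\right),
\]
which coincides with \eqref{IntroLabel1} when $k=1$.

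The next step is a partial-fraction decomposition of $1/[j^{k}(3j\mp 1)^{k}]$ into pieces $1/j^{i}$ and $1/(3j\mp 1)^{i}$ for $1\le i\le k$. Expanding $1/(3j\mp 1)^{k}$ about $j=0$ and $1/j^{k}$ about $j=\pm\tfrac{1}{3}$, the coefficients of $1/j^{i}$ come out as $\pm(-1)^{k}3^{k-i}\binom{2k-i-1}{k-1}$ and those of $1/(3j\mp 1)^{i}$ as $\pm(-1)^{k}3^{k}\binom{2k-i-1}{k-1}$, with signs depending on the parity of $i$. A decisive observation is that the $1/j^{i}$ coefficients from the $3j-1$ and $3j+1$ fractions \emph{cancel} for odd $i$ and \emph{double} for even $i$; this is exactly why only the even exponents $i=2j$ with $j\le\lfloor k/2\rfloor$ appear in the second line of the theorem.

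For $i\ge 2$ the resulting $j$-sums can be identified with known special values. Splitting each alternating sum according to the residue of the index modulo $6$ and reading off the definition \eqref{def-hurwitz-zeta} gives
\begin{align*}
\sum_{j\ge1}\frac{(-1)^{j+1}}{j^{i}}&=(1-2^{1-i})\zeta(i),\\
\sum_{j\ge1}\frac{(-1)^{j+1}}{(3j-1)^{i}}&=\frac{\zeta(i,\tfrac{1}{3})-\zeta(i,\tfrac{5}{6})}{6^{i}},\\
\sum_{j\ge1}\frac{(-1)^{j+1}}{(3j+1)^{i}}&=\frac{\zeta(i,\tfrac{2}{3})-\zeta(i,\tfrac{1}{6})}{6^{i}}+1,
\end{align*}
the trailing $+1$ coming from the $m=0$ term absent from $\zeta(i,\tfrac{1}{6})/6^{i}$. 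Substituting into the master series and using $k!\,2^{k}\cdot 3^{k}/6^{i}=k!\,6^{k-i}$ to simplify the prefactors reproduces both the second and the third lines of the theorem; in particular the constant $+6^{i}$ inside the third summand is precisely the $+1$ above pulled through by this prefactor.

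The case $i=1$ must be treated separately. Here the $1/j$ pieces cancel outright ($(-1)^{k}+(-1)^{k-1}=0$), and the residual sums $S_{\pm}=\sum_{j\ge 1}(-1)^{j+1}/(3j\mp 1)$ appear with equal and opposite coefficients, so only the difference $S_{+}-S_{-}$ survives. Writing each $S_{\pm}$ as $\int_{0}^{1}$ of a geometric series and using the factorisation $1+x^{3}=(1+x)(1-x+x^{2})$ yields
\[
S_{+}-S_{-}=1-\int_{0}^{1}\frac{dx}{1-x+x^{2}}=1-\frac{2\pi}{3\sqrt{3}},
\]
and multiplication by the prefactor $k!(-1)^{k}\cdot 6^{k}\binom{2k-2}{k-1}$ reproduces the first line of the theorem. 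The main obstacle is the sign and coefficient bookkeeping in the partial-fraction step: one must carefully track the parity effects of $(\mp 1)^{i}$ and verify that the cancellation/doubling dichotomy at odd and even $i$ combines cleanly with the alternating signs in the master series to give the compact expression in the statement. The only analytic subtlety is the term-by-term integration leading to the master series, which is routine given the rapid decay of the pentagonal exponents $j(3j\pm1)/2$.
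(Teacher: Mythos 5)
Your proposal follows essentially the same route as the paper's proof: the tail-integral plus pentagonal-number-theorem reduction to the master series $k!\,2^k\sum_{m\ge1}(-1)^{m+1}\bigl((m(3m-1))^{-k}+(m(3m+1))^{-k}\bigr)$, the partial-fraction decomposition with the binomial coefficients of Lemma~\ref{FractionDecomposition}, the odd/even cancellation of the $1/m^i$ pieces, and the identification of the remaining alternating sums with $(1-2^{1-i})\zeta(i)$ and the Hurwitz-zeta differences (including the $+6^i$ from the shift $\zeta(i,\tfrac76)=\zeta(i,\tfrac16)-6^i$), with $i=1$ handled by the arctangent integral exactly as in the proof of Theorem~\ref{MeanVarOfMax}. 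The one slip is a sign in the $i=1$ bookkeeping: $S_+-S_-=\int_0^1\frac{dx}{1-x+x^2}-1=\frac{2\pi}{3\sqrt3}-1$, not its negative; the combination that actually survives the decomposition is $S_--S_+=1-\frac{2\pi}{3\sqrt3}$, because $(1-3m)^j=(-1)^j(3m-1)^j$ contributes an extra factor $(-1)^j$ at $j=1$, so the value you feed into the prefactor --- and hence your first line --- is nevertheless correct.
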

The right-hand side of (\ref{MometsOfMFormula1}) involves the values of  Hurwitz's zeta function
$\zeta(s,a)$ for positive integers $s$ and $a={1}/{6},{1}/{3},{2}/{3},{5}/{6}$. More specifically, for positive even values of $s$, we need the sums $\zeta(s,1/3)+\zeta(s,2/3)$ and $\zeta(s,1/6)+\zeta(s,5/6)$, which are easy to calculate (using, say, \citep{QinShangLi2013}). However, for odd $s$ we need the differences $\zeta(s,1/3)-\zeta(s,2/3)$ and
 $\zeta(s,1/6)-\zeta(s,5/6)$. Thus, to write
 the right-hand side of \Eq{MometsOfMFormula1} in a more elementary way, we need to find the values of $\zeta(s,a)$ for odd integers and $a={1}/{6},{1}/{3},{2}/{3},{5}/{6}$. These values were given in \citep{QinShangLi2013} as quite cumbersome expressions. Our next result provides more convenient expressions for these values (which, of course, should yield the same results).
 For completeness, we  deal with~$a={1}/{4},{1}/{2},{3}/{4}$ as well, although these are not needed for simplifying Theorem \ref{explicit_k_th_moment}.

\begin{theorem}\label{KolesnikTheorem1} For any $m \geq 1$ we have:
\begin{equation*}
\begin{aligned}
  \zeta(k, 1/2) & = (2^k - 1) \zeta(k),\qquad k = 2m,2m+1; \\
  \zeta(2m + 1, 1/3) & = \frac{3^{2m + 1} - 1}2 \zeta(2m + 1) + \frac{K_m(3)}{\sqrt 3}; \\
  \zeta(2m + 1, 2/3) & = \frac{3^{2m + 1} - 1}2\zeta(2m + 1) - \frac{K_m(3)}{\sqrt 3}; \\
  \zeta(2m + 1, 1/4) & = \left[2^{2m}\left(2^{2m+1} -1\right)\right] \zeta(2m + 1) + \frac12 K_m(4); \\
  \zeta(2m + 1, 3/4) & = \left[2^{2m}\left(2^{2m+1} -1\right) \right] \zeta(2m + 1) - \frac12K_m({4}) \\
  \zeta(2m + 1, 1/6) & = \frac{\left(2^{2m+1}-1\right)\left(3^{2m+1}-1\right)}{2} \zeta(2m + 1) + \frac{K_m(6) - K_m(3)}{\sqrt 3} \\
  \zeta(2m + 1, 5/6) & = \frac{\left(2^{2m+1}-1\right)\left(3^{2m+1}-1\right)}{2} \zeta(2m + 1) - \frac{K_m(6) - K_m(3)}{\sqrt 3} 
\end{aligned}
\end{equation*}
where, for $q\in\lbrace{3,4,6\rbrace}$, 
\begin{equation}\label{Cm}
K_m(q) = \frac{(2\pi )^{2m + 1} (-1)^m}{2(2m)!} \left[\frac q2 - \sum_{j = 0}^{m} B_{2j} \frac{(2m)!\cdot q^{2j}}{(2m -2j+1)!(2j)!} \right].
\end{equation}
\end{theorem}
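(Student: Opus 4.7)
My plan is to handle, for each rational $a \in \{1/2, 1/3, 2/3, 1/4, 3/4, 1/6, 5/6\}$, the symmetric combination $\zeta(s, a) + \zeta(s, 1-a)$ and the antisymmetric combination $\zeta(s, a) - \zeta(s, 1-a)$ separately, then sum or subtract to extract the individual values.

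The symmetric combinations are elementary. From the defining series $\zeta(s, p/q) = q^s \sum_{n \ge 1,\, n \equiv p \,(\mathrm{mod}\,q)} n^{-s}$ one has
\[
\zeta(s, p/q) + \zeta(s, (q-p)/q) = q^s \sum_{n \equiv \pm p \,(\mathrm{mod}\,q)} n^{-s}.
\]
For $(p,q) \in \{(1,2),(1,3),(1,4),(1,6)\}$ the residue set $\{n : n \equiv \pm p \,(\mathrm{mod}\,q)\}$ consists exactly of the integers coprime to a specified set of primes, so an Euler-product calculation collapses the right-hand side to a rational multiple of $\zeta(s)$. This at once yields $\zeta(k, 1/2) = (2^k-1)\zeta(k)$ for all $k > 1$; and for $q = 6$ it gives $6^s(1-2^{-s})(1-3^{-s})\zeta(s)$, which at $s = 2m+1$ factors as $(2^{2m+1}-1)(3^{2m+1}-1)\zeta(2m+1)$, matching the stated sum $\zeta(2m+1, 1/6) + \zeta(2m+1, 5/6)$.

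For the antisymmetric combinations the key tool is the classical Fourier expansion of odd-index Bernoulli polynomials
\[
S_{2m+1}(x) := \sum_{n=1}^\infty \frac{\sin(2\pi n x)}{n^{2m+1}} = \frac{(-1)^{m+1}(2\pi)^{2m+1}}{2(2m+1)!} B_{2m+1}(x), \qquad 0 < x < 1,
\]
which I would derive from Hurwitz's functional equation together with the standard evaluation $\zeta(-2m, a) = -B_{2m+1}(a)/(2m+1)$. Expanding the indicator $\mathbf{1}_{n \equiv p \,(\mathrm{mod}\,q)}$ in additive characters mod $q$ and using the reflection $S_s(1-x) = -S_s(x)$ then gives
\[
\zeta(s, p/q) - \zeta(s, (q-p)/q) = 2 q^{s-1} \sum_{k=1}^{q-1} \sin\!\bigl(2\pi k p/q\bigr) S_s(k/q).
\]
A direct binomial expansion of $B_{2m+1}(1/q) = \sum_{k=0}^{2m+1}\binom{2m+1}{k}B_k\,q^{k-2m-1}$, isolating the $B_0$, $B_1 = -\tfrac12$, and $B_{2j}$ ($j \ge 1$) contributions, identifies the $K_m(q)$ defined by (\ref{Cm}) with the compact form $K_m(q) = q^{2m+1}\, S_{2m+1}(1/q)$.

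Specializing $p = 1$ to $q = 3$ and $q = 4$, the finite trigonometric sum collapses (via $S_s(1-x) = -S_s(x)$) to $\sqrt 3\, S_{2m+1}(1/3)$ and to $2\, S_{2m+1}(1/4)$, respectively; multiplying by $2q^{2m}$ produces the announced formulas for $\zeta(2m+1, 1/3) - \zeta(2m+1, 2/3)$ and $\zeta(2m+1, 1/4) - \zeta(2m+1, 3/4)$. The main obstacle is the case $q = 6$: the computation above yields $\sqrt 3\,[S_{2m+1}(1/6) + S_{2m+1}(1/3)]$, leading after the obvious scaling to $\tfrac{1}{\sqrt 3}[K_m(6) + 2^{2m+1} K_m(3)]$, whereas the theorem states the answer as $\tfrac{2}{\sqrt 3}[K_m(6) - K_m(3)]$. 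Reconciling these two expressions reduces to the algebraic identity
\[
K_m(6) = (2^{2m+1} + 2)\, K_m(3),
\]
which I would derive from the Bernoulli polynomial multiplication formula $B_n(2x) = 2^{n-1}[B_n(x) + B_n(x + \tfrac12)]$ applied at $x = 1/6$, combined with the reflection $B_{2m+1}(2/3) = -B_{2m+1}(1/3)$. Once this identity is in hand, the full list of formulas in the theorem follows.
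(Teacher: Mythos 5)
Your proposal is correct, and it rests on the same two ingredients as the paper's proof --- the residue-class identity $\sum_{c=1}^{q-1}\zeta(s,c/q)=(q^s-1)\zeta(s)$ for the symmetric combinations, and the functional equation (equivalently, the Fourier expansion of $B_{2m+1}$) for the antisymmetric ones --- but you run the second ingredient in the opposite direction. The paper writes the single known quantity $C_m(1/q)=K_m(q)$, obtained from $\zeta(-2m,1/q)$, as a linear combination $\sum_{c}\sin(2\pi c/q)\,\zeta(2m+1,c/q)$ of the unknowns and solves the resulting small linear system for each $q$, feeding the already-computed values at $1/2$, $1/3$, $2/3$ into the $q=6$ system; that is how $\zeta(2m+1,1/6)-\zeta(2m+1,5/6)=\tfrac{2}{\sqrt3}\bigl(K_m(6)-K_m(3)\bigr)$ drops out with no further work. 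You instead invert the finite Fourier transform explicitly, expressing $\zeta(s,p/q)-\zeta(s,(q-p)/q)$ in terms of the sine series $S_s(k/q)$; this is cleaner for $q=3,4$ but, as you correctly note, for $q=6$ it delivers the answer in the form $\tfrac{1}{\sqrt3}\bigl[K_m(6)+2^{2m+1}K_m(3)\bigr]$, so you need the extra identity $K_m(6)=(2^{2m+1}+2)K_m(3)$. That identity is true and your proposed derivation works: the multiplication formula $B_n(2x)=2^{n-1}\bigl[B_n(x)+B_n(x+\tfrac12)\bigr]$ at $x=1/6$, together with $B_{2m+1}(2/3)=-B_{2m+1}(1/3)$, gives $B_{2m+1}(1/6)=(1+2^{-2m})B_{2m+1}(1/3)$, and since $K_m(q)=q^{2m+1}S_{2m+1}(1/q)$ (an identification which, as I checked, does match \Eq{Cm} after the binomial expansion of $B_{2m+1}(1/q)$ you describe), the ratio $K_m(6)/K_m(3)$ equals $2^{2m+1}(1+2^{-2m})=2^{2m+1}+2$. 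So the two proofs are equivalent in content; the paper's linear-algebra bookkeeping avoids the multiplication formula entirely, while your route makes the Fourier-analytic mechanism more transparent at the cost of one extra Bernoulli identity.
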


\begin{remark}
$K_m(q)$ in the theorem coincides with $C_m(a)$, to be used in the proof for $a=1/q$. We have written it here this way to make the expression simpler.
\end{remark}

\begin{example}\label{cumbersome}
The second addend in the first non-trivial value in the theorem, $\zeta(2m + 1, 1/3)$,  is ${K_m(3)}/{\sqrt 3}$, whereas  in \citep{QinShangLi2013} it is:
$$ \frac{\sqrt{3}}{2\pi}
\left(\left(2m+ 2 + 3^{2m+2}\right)\zeta(2m+ 2)-2\sum_{j=0}^{m-1}
3^{2m-2j}\zeta(2m-2j)\zeta (2j + 2)\right).
$$
\end{example}

Employing Theorem \ref{KolesnikTheorem1}, we are able to rewrite Theorem \ref{explicit_k_th_moment} without reference to Hurwitz's zeta function or, indeed, even Riemann's zeta function (at odd integers). We do use, however,  the Bernoulli numbers. We first define the function $B_m(x)$ implicitly by:
$$
\frac{ze^{xz}}{e^z-1}=\sum_{m=0}^{\infty}\frac{B_m(x)}{m!}z^m,\qquad\left|z\right|<2\pi.
$$
The numbers $B_m(0)$ are the $\textit{Bernoulli numbers}$, and are denoted by $B_m$. The Bernoulli numbers are a very classical object in analytic number theory. For further details we refer the reader to
\citep{Apostol,Ireland}.

\begin{theorem}\label{explicit_k_th_momentFormula2}
For $k=1,2,\ldots$, 
\begin{align*}
    E(M^k)= k!\left(-6\right)^k\Bigg(&\binom{2k-1}{k}- \frac{2\pi}{3\sqrt{3}} \binom{2k-2}{k-1}\\
    &\:+\sum_{j=1}^{\lfloor k/2\rfloor }\frac{(-1)^{j}\cdot \pi^{2j}B_{2j}}{(2j)!}\Big(2^{2j-1}-1\Big)\Big(1-\frac{3}{3^{2j}}\Big) \binom{2k-2j-1}{k-1}\\
    &\:-\frac{2}{\sqrt{3}}\sum_{j=1}^{\lfloor (k-1)/2\rfloor}
    \left(\frac{\pi}{3}\right)^{2j+1}
    (-1)^j\binom{2k-2j-2}{k-1}\\
    &\qquad \qquad \qquad \,\,\cdot \left(\frac{3j+1}{(2j+1)!}-3\sum_{\ell=1}^{j}\frac{{6}^{2\ell-1}B_{2\ell}}{(2j-2\ell+1)!(2\ell)!}\right)\Bigg).
\end{align*}where $B_m$ is the  $m$-th Bernoulli  number.
\end{theorem}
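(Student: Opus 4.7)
The plan is to derive Theorem \ref{explicit_k_th_momentFormula2} directly from Theorem \ref{explicit_k_th_moment} by substituting the explicit evaluations from Theorem \ref{KolesnikTheorem1} and eliminating Riemann's zeta at even integers via Euler's formula
\[
\zeta(2j)=\frac{(-1)^{j+1}(2\pi)^{2j}B_{2j}}{2(2j)!}.
\]
I would split the innermost sum in \eqref{MometsOfMFormula1} (the sum over $j=2,\ldots,k$) into three pieces: the constant $6^j$ term; the even-$j$ terms; and the odd-$j$ terms. Each of these three pieces matches (after simplification) one of the three groups of summands appearing in Theorem~\ref{explicit_k_th_momentFormula2}, with the constant term absorbing into the first line, the even-$j$ terms into the second line, and the odd-$j$ terms into the third and fourth lines.

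For the first line, the constant contribution is
\[
k!(-1)^{k}\sum_{j=2}^{k}\binom{2k-j-1}{k-1}6^{k-j}\cdot 6^{j}
=k!(-6)^{k}\sum_{j=2}^{k}\binom{2k-j-1}{k-1}.
\]
The hockey-stick identity collapses the last sum to $\binom{2k-2}{k}$, which combined with the original $\binom{2k-2}{k-1}$ coming from the first term of \eqref{MometsOfMFormula1} gives $\binom{2k-2}{k-1}+\binom{2k-2}{k}=\binom{2k-1}{k}$, producing exactly the expression on line 1 of Theorem~\ref{explicit_k_th_momentFormula2}.

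For the second line, on the even-$j$ part of the third sum (write $j=2m$, so $(-1)^{j}=1$) the contribution reduces to
\[
[\zeta(2m,\tfrac13)+\zeta(2m,\tfrac23)]-[\zeta(2m,\tfrac16)+\zeta(2m,\tfrac56)],
\]
and the multiplication theorem for Hurwitz's zeta (applied with $q=3$ and $q=6$, using also $\zeta(s,\tfrac12)=(2^s-1)\zeta(s)$) evaluates this to $-(3^{2m}-1)(2^{2m}-2)\zeta(2m)=-2(3^{2m}-1)(2^{2m-1}-1)\zeta(2m)$. Adding this to the contribution from the middle sum of \eqref{MometsOfMFormula1} and collecting powers of $2$ and $3$ produces a single factor $-6^{k-2m+1}(2^{2m-1}-1)(3^{2m-1}-1)\zeta(2m)$. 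Rewriting $\zeta(2m)$ via Euler's formula and using $1-3^{1-2m}=3(3^{2m-1}-1)/3^{2m}$ yields precisely line 2 of Theorem~\ref{explicit_k_th_momentFormula2}.

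For the third and fourth lines, on the odd-$j$ part (write $j=2m+1$, $m\ge 1$), Theorem \ref{KolesnikTheorem1} gives
\[
\zeta(2m+1,\tfrac13)-\zeta(2m+1,\tfrac23)=\tfrac{2K_m(3)}{\sqrt3},
\quad
\zeta(2m+1,\tfrac16)-\zeta(2m+1,\tfrac56)=\tfrac{2(K_m(6)-K_m(3))}{\sqrt3},
\]
so the inner expression becomes $-\frac{2K_m(6)}{\sqrt3}$ -- a pleasant cancellation of $K_m(3)$. Substituting the explicit formula \eqref{Cm} for $K_m(6)$, splitting off the $\ell=0$ Bernoulli term ($B_0=1$) and using $3-\frac{1}{2m+1}=\frac{2(3m+1)}{2m+1}$ to normalize, reduces the bracket to $\frac{3m+1}{(2m+1)!}-3\sum_{\ell=1}^{m}\frac{6^{2\ell-1}B_{2\ell}}{(2m-2\ell+1)!(2\ell)!}$; combining with the prefactor $(2\pi)^{2m+1}/6^{2m+1}=(\pi/3)^{2m+1}\cdot 2^{2m+1}/2^{2m+1}$ and the overall $k!(-6)^{k}$ reproduces lines~3--4.

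The main obstacle is the bookkeeping in the even-$j$ step: one must simultaneously substitute the Hurwitz multiplication identities, invoke Euler's formula for $\zeta(2m)$, and perform the algebraic identity $2^{2m}(1-2^{1-2m})-(3^{2m}-1)(2^{2m-1}-1)=-3(3^{2m-1}-1)(2^{2m-1}-1)$, which is what funnels the combined contributions of \eqref{MometsOfMFormula1}'s middle sum and of the even-$j$ portion of its third sum into a single clean term. Aside from this, the proof is an essentially mechanical substitution-and-collection argument.
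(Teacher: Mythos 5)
Your proposal is correct and follows essentially the same route as the paper's proof: both substitute Theorem \ref{KolesnikTheorem1} into \Eq{MometsOfMFormula1}, split the inner sum into the constant, even-$j$, and odd-$j$ parts, exploit the cancellation down to $-2K_m(6)/\sqrt3$ for odd $j$, combine the even-$j$ terms with the middle sum via the identity $2^{2j}(1-2^{1-2j})-(3^{2j}-1)(2^{2j-1}-1)=-3(2^{2j-1}-1)(3^{2j-1}-1)$ together with Euler's formula for $\zeta(2j)$, and collapse the constant terms by the hockey-stick identity to get $\binom{2k-1}{k}$. All the intermediate identities you state check out.
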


In Table \ref{table2} we present some results concerning the first five moments of $M$.
 For  each $k$, we present the  $k$-th moment of $M$ as obtained by Theorem \ref{explicit_k_th_momentFormula2} and as calculated numerically by Brennan et al.\:\citep{Brennan}. One checks easily that the two indeed coincide.

\bigskip
\newcommand\VRule[1][\arrayrulewidth]{\vrule width #1}
\begin{centering}
\setcellgapes{7pt}
\begin{table}[ht]
\makegapedcells
\centering
\begin{tabular}{!{\VRule}c!{\VRule}c!{\VRule}c!{\VRule}c!{\VRule}}
\specialrule{2pt}{0pt}{0pt}
$k$ 
& {$E(M^k)$ by Thm. \ref{explicit_k_th_momentFormula2}}
& {$ E(M^k)$ (Numerical)}  
\\ \hline
& & \\[-2.6ex] \hline
$1$ & $\frac{4\sqrt{3}}{3}\pi-6$  & $1.255$ 
\\ \hline
$2$& $-{4\pi^2}- 32{\sqrt{3}}\pi+216$ & $2.397$  
\\ \hline
$3$ & $\frac{80\sqrt{3}}{3}\pi^3 + 216\pi^2 + {1728 }{\sqrt{3}}\pi - 12960$ & $6.689$  
\\ \hline
$4$ & \makecell{$\qquad\quad\:-\frac{1456 }{5}\pi^4 -{2560 }{\sqrt{3}}\pi^3 - 17280 \pi^2$\\  $ - { 138240 }{\sqrt{3}}\pi + 1088640$}
 & $25.453$  
\\ \hline
$5$ & \makecell{
    $\frac{5440\sqrt{3}}{3}\pi^5 + {43680 }\pi^4+ {288000}{\sqrt{3}}\pi^3$\\$\qquad\quad\:+ {1814400 }\pi^2  + {14515200}{ \sqrt{3}}\pi - 117573120$}
& $123.705$  
\\ \hline
\specialrule{2pt}{0pt}{0pt}
\end{tabular}
\caption{Moments of the maximum waiting time.}
\label{table2}
\end{table} 
\end{centering}

The following proposition shows that, once we know the moments of $M$, we get good estimates for the moments of the maximal waiting time in both finite models.
\begin{proposition}\label{FiniteContinuous1}
For every fixed $k$, as $n\to\infty$:
\begin{description}
    \item{(a)}
    $$E(W_{(n)}^k)=n^kE(M^k)+O\left(1\right).
$$
    \item{(b)}
    $$E(D_{(n)}^k)=n^kE(M^k)+o\left(n^k\right).
$$

\end{description}
\end{proposition}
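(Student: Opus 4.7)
The plan is to treat the two parts separately. For part (a), recall the identification $X_i = W_{n-i}/n \sim \mathrm{Exp}(i)$, so that $W_{(n)} = n M_n$ with $M_n := \max_{1 \le i \le n} X_i$. Realize $M$ on the same probability space by retaining $X_1,\ldots,X_n$ and adjoining an independent tail $Z_n := \sup_{i > n} X_i$; then $M = \max(M_n, Z_n)$ with $M_n$ and $Z_n$ independent. Case analysis on whether $Z_n > M_n$ yields the pointwise bound $0 \le M^k - M_n^k \le Z_n^k \mathbf{1}_{\{Z_n > M_n\}}$, and by independence combined with the crude estimate $F_{M_n}(u) \le 1 - e^{-u} \le u$,
\[
0 \le E(M^k) - E(M_n^k) \le E\bigl(Z_n^k F_{M_n}(Z_n)\bigr) \le E(Z_n^{k+1}).
\]
The union bound $P(Z_n > t) \le e^{-(n+1)t}/(1-e^{-t})$, plugged into the tail formula for $E(Z_n^{k+1})$ and split across $t \le 1/n$ and $t > 1/n$, gives $E(Z_n^{k+1}) = O(1/n^k)$, and multiplying through by $n^k$ yields (a).

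For part (b), the plan is to show $D_{(n)}/n \xrightarrow{d} M$ and then upgrade to convergence of the $k$-th moment via an $L^{k+1}$ bound. The distribution of $D_{(n)}/n$ is explicit: since $P(D_j \le m) = 1 - (j/n)^m$ for integer $m$, the substitution $i = n-j$ gives
\[
P\bigl(D_{(n)}/n \le x\bigr) = \prod_{i=1}^{n-1}\Bigl(1 - (1 - i/n)^{\lfloor nx \rfloor}\Bigr).
\]
For each fixed $i$ this factor converges to $1 - e^{-ix}$, and using $(1 - i/n)^{\lfloor nx\rfloor} \le e^{-(i/n)\lfloor nx\rfloor} \le e^{-ix + i/n}$ the tail of the product beyond any fixed index is uniformly close to $1$; hence the displayed product tends to $\prod_{i \ge 1}(1 - e^{-ix}) = P(M \le x)$ for every $x > 0$.

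Uniform integrability of $(D_{(n)}/n)^k$ then follows from the crude inequality $D_{(n)}^{k+1} \le \sum_j D_j^{k+1}$ combined with the standard estimate $E(G(p)^{k+1}) = O(1/p^{k+1})$, which gives
\[
E(D_{(n)}^{k+1}) \le C \sum_{j=0}^{n-1}\Bigl(\tfrac{n}{n-j}\Bigr)^{k+1} = C\, n^{k+1} \sum_{i=1}^n i^{-(k+1)} = O(n^{k+1}),
\]
so $\sup_n E\bigl((D_{(n)}/n)^{k+1}\bigr) < \infty$. Combined with the distributional convergence, this yields $E(D_{(n)}^k) = n^k E(M^k) + o(n^k)$. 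The main subtlety lies in handling the infinite product in (b) uniformly across the regimes where $i$ is fixed (termwise convergence) and where $i$ is comparable to $n$ (where one exploits the super-exponential smallness of $(1-i/n)^{\lfloor nx\rfloor}$); the remaining steps reduce to routine tail and moment estimates.
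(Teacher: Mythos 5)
Your proposal is correct. In part (a) you are, at bottom, using the same key estimate as the paper: your tail bound $P(Z_n>t)\le e^{-(n+1)t}/(1-e^{-t})$ is literally the paper's bound $1-\prod_{j>n}(1-e^{-jt})\le e^{-(n+1)t}/(1-e^{-t})$ on the gap between $F_{W_{(n)}/n}$ and $F_M$; the paper then integrates $kt^{k-1}$ against the resulting $O(e^{-nt})$ discrepancy of distribution functions, whereas you recast it as a coupling $M=\max(M_n,Z_n)$ and control $E(M^k)-E(M_n^k)$ by $E(Z_n^{k+1})$ via the pointwise bound $M^k-M_n^k\le Z_n^k\mathbf{1}_{\{Z_n>M_n\}}$ and the extra factor $F_{M_n}(Z_n)\le Z_n$. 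Both give $O(n^{-k})$ before rescaling, so this is the same mechanism in probabilistic rather than analytic clothing. Part (b) is where you genuinely diverge: the paper disposes of it in two lines by citing Brennan et al.'s Theorem 4 (which already identifies $\lim E(D_{(n)}^k)/n^k$ with the series in \Eq{theorem4label3}, i.e.\ with $E(M^k)$), whereas you give a self-contained argument via convergence in distribution of $D_{(n)}/n$ to $M$ together with uniform integrability from the $(k+1)$-st moment bound $E(D_{(n)}^{k+1})\le C\,n^{k+1}\sum_{i\ge1}i^{-(k+1)}=O(n^{k+1})$. Your route costs more work but buys independence from the cited result and makes transparent exactly where the $o(n^k)$ (rather than $O(1)$) error in (b) comes from; all the individual steps (the termwise limit $(1-i/n)^{\lfloor nx\rfloor}\to e^{-ix}$, the uniform control of the product tail, and the geometric moment bound) check out.
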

\begin{remark}
In fact, the error terms in the proposition are much smaller, as one can show employing the proof techniques of Theorem \ref{estimatePmTheorem} below.
\end{remark}
Next, we consider the probability of each $X_m$ in the sequence $\left(X_i\right)_{i=1}^{\infty}$ being the maximum.
\begin{theorem}\label{estimatePmTheorem}
As $m\to\infty$, 
\begin{equation}\label{estimatePmTheoremEQ}
P(X_{m}=M)= \pi\sqrt{2m}\cdot e^{-\pi\sqrt{{2}/{3}}\cdot\sqrt{m}}\cdot (1+o(1)).
\end{equation}
\end{theorem}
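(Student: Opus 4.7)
My plan is to compute $P(X_m = M)$ by a saddle-point analysis of an integral whose small-$t$ behavior is governed by the classical modular transformation of the Dedekind eta function, in keeping with the partition-theoretic flavor of the paper. Conditioning on the value of $X_m$ and using independence, one obtains
\[
P(X_m = M) = \int_0^\infty m e^{-mt}\prod_{i \ne m}(1-e^{-it})\,dt = \int_0^\infty \frac{m}{e^{mt}-1}\prod_{i=1}^\infty(1-e^{-it})\,dt.
\]
This reduces the problem to asymptotics of an integral whose inner product is essentially the generating product of the partition function evaluated at $e^{-t}$.

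The key analytic input is the eta modular identity
\[
\prod_{i=1}^\infty(1-e^{-it}) = \sqrt{\frac{2\pi}{t}}\,\exp\!\left(\frac{t}{24}-\frac{\pi^2}{6t}\right)\prod_{n=1}^\infty(1-e^{-4\pi^2 n/t}),
\]
which turns the slowly convergent product (for small $t$) into one whose trailing factor is $1 + O(e^{-4\pi^2/t})$. The effective integrand is then $\frac{m}{e^{mt}-1}\sqrt{2\pi/t}\,e^{-\pi^2/(6t)}$ multiplied by a factor that will be $1+o(1)$ on the dominant window. The exponent $-mt - \pi^2/(6t)$ is minimized at $t^\ast = \pi/\sqrt{6m}$, where it equals $-2\pi\sqrt{m/6} = -\pi\sqrt{2/3}\,\sqrt m$, exactly the exponential rate predicted by the theorem.

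Rescaling $t = s/\sqrt m$ freezes the saddle at $s^\ast = \pi/\sqrt{6}$ and reduces the problem to a standard Laplace integral $\int h(s,m)\exp\!\bigl(-\sqrt m\,g(s)\bigr)\,ds$ with $g(s) = s + \pi^2/(6s)$. Computing the Gaussian integral using $g''(s^\ast) = 2\sqrt 6/\pi$, together with the non-exponential factor $\sqrt{2\pi\sqrt m/s}$ evaluated at $s^\ast$, a short arithmetic check yields the amplitude $\pi\sqrt{2m}$, matching the statement exactly.

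The main technical obstacle is uniform error control needed for the $(1+o(1))$ factor. On the dominant window $|s - s^\ast| \le m^{-1/4 + \varepsilon}$ one must show that the quadratic expansion of $g$ is uniformly accurate and that the omitted factors $\frac{mt}{e^{mt}-1}\cdot e^{mt}$ and $e^{t/24}\prod(1-e^{-4\pi^2 n/t})$ are both $1+o(1)$; these are harmless since $mt^\ast = \pi\sqrt{m/6}\to\infty$ and $t^\ast\to 0$. Outside this window, convexity $g(s) - g(s^\ast) \ge c\,(s-s^\ast)^2$ suppresses the contribution. The delicate region is $t \to 0^+$, where the trivial bound $\prod(1-e^{-it})\le 1$ is far too weak; only the eta identity provides the sharp uniform decay $\exp(-\pi^2/(6t))$ required to ensure that the tail beats the main term $\pi\sqrt{2m}\,e^{-\pi\sqrt{2/3}\sqrt m}$.
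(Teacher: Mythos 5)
Your proposal is correct and follows essentially the same route as the paper: the same integral representation $\int_0^\infty m e^{-mt}(1-e^{-mt})^{-1}\prod_{j\ge1}(1-e^{-jt})\,dt$, the same replacement of the Euler product by its Hardy--Ramanujan/eta-transformation asymptotics near $t=0$, and the same Laplace analysis at the saddle $t^\ast=\pi/\sqrt{6m}$, with matching exponential rate and amplitude $\pi\sqrt{2m}$. The only (harmless) difference is organizational: you invoke the exact Dedekind eta identity, which gives a clean uniform upper bound for the tail near $t\to0^+$, whereas the paper works in the variable $x=e^{-t}$ and controls the same regions via an explicit five-interval decomposition of $[0,1]$.
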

The right-hand side of (\ref{estimatePmTheoremEQ}) is very reminiscent of the asymptotic expression for the partition function. Recall that the partition function $p(m)$ counts the various possibilities of representing a positive integer $m$ as a sum of positive integers. The partition function is a classical object in number theory \citep[Chaper 14]{Apostol}. Hardy and Ramanujan \citep{Hardy} showed that, asymptotically as $m\to\infty$: 
$${\displaystyle p(m)= {\frac {1}{4m{\sqrt {3}}}}\cdot e ^{\pi {\sqrt {{2}/{3}}}\cdot\sqrt{m}}}\left(1+o(1)\right).
$$
Tיhus, by Theorem \ref{estimatePmTheorem},  
 $P\left(X_m=M\right)$ is asymptotically the same as ${1}/{p(m)}$ up to a relatively small factor of~$\Theta\left(\sqrt{m}\right).$
This fact, which may seem coincidental, is not surprising once we notice that distribution function of $M$ is intimately related to the generating function of the partition function. (This will be explained later on at the beginning of Section~\ref{section5}.)
The asymptotic behavior of this generating function was investigated by Hardy and Ramanujan \citep{Hardy}, and is the key to the proof of Theorem \ref{estimatePmTheorem}. 
It would be interesting to explain intuitively the proximity of the two quantities $P\left(X_m=M\right)$ and ${1}/{p(m)}$.

In Table \ref{table1} we present some numerical results relating to Theorem \ref{estimatePmTheorem}. For several values of~$m$, we present three quantities:
\begin{description}
\item{(a)} The exact value of $P(X_m=M)$, given by the integral on the right-hand side of (\ref{beta_m label0}) below, calculated numerically by Mathematica. 
\item{(b)} The main term on the right-hand side of (\ref{estimatePmTheoremEQ}). 
\item{(c)}  The value of the integral 
\begin{equation}\label{HardyRamanujan1}
\int\limits_{0}^{1}\frac{mx^{m-1}}{1-x^m}\sqrt{\frac{2\pi}{1-x}}e^{-\tfrac{\pi^2}{6(1-x)}+\tfrac{\pi^2}{12}}dx,
\end{equation}
in which the correct integrand (see the right-hand side of (\ref{beta_m label0}) below) has been replaced by the estimate of Hardy and Ramanujan. The integral has been calculated numerically by Mathematica. It turns out that, although the asymptotics holds only near the point~$1$, the values in (a) and (c) are pretty close. Note that, for $m=1$, the substitution~$y=\sqrt{1/(1-x)}$ leads to an explicit value for the integral (\ref{HardyRamanujan1}),
$$\int\limits_0^1 \frac{1}{1-x}\sqrt{\frac{2\pi}{1-x}}e^{-\tfrac{\pi^2}{6(1-x)}+\tfrac{\pi^2}{12}}dx = \int\limits_1^{\infty} 2\sqrt{{2\pi}}e^{-\tfrac{\pi^2}{6}y^2+\tfrac{\pi^2}{12}}dy =4\sqrt{3}e^{\tfrac{\pi^2}{12}}\left(1-\Phi\left(\frac{\pi\sqrt{3}}{3}\right)\right),$$
but it seems like the integral cannot be calculated explicitly for $m\ge 2$.
\end{description}
\begin{centering}
\setcellgapes{7pt}
\begin{table}[ht]
\makegapedcells
\centering
\begin{tabular}{!{\VRule}c!{\VRule}c!{\VRule}c!{\VRule}c!{\VRule}}
\specialrule{2pt}{0pt}{0pt}
\multicolumn{1}{!{\VRule}c!{\VRule}}{$m$}&
$\quad P(X_m=M)\quad$ & $\quad\pi\sqrt{2m}\cdot e^{-\pi\sqrt{\tfrac{2}{3}}\cdot\sqrt{m}}\qquad$ & $\int\limits_{0}^{1}\frac{mx^{m-1}}{1-x^m}\sqrt{\frac{2\pi}{1-x}}\cdot e^{-\tfrac{\pi^2}{6(1-x)}+\tfrac{\pi^2}{12}}dx$ \\ \hline
& & \\[-2.6ex] \hline
$1$ & $0.516$  & $0.342$ & $0.550$\\ \hline
$2$& $0.213$ & $0.167$  & $0.225$\\ \hline
$3$ & $10.73\cdot 10^{-2}$ & $9.05\cdot 10^{-2}$  & $11.23\cdot 10^{-2}$\\ \hline
$4$ & $5.98\cdot 10^{-2}$ & $5.26\cdot 10^{-2}$  & $6.23\cdot 10^{-2}$\\ \hline
$5$ & $3.55\cdot 10^{-2}$ & $3.21\cdot 10^{-2}$  & $3.68\cdot 10^{-2}$\\ \hline
$10$ & $4.41\cdot 10^{-3}$ & $4.22\cdot 10^{-3}$  & $4.53\cdot 10^{-3}$\\ \hline
$50$ & $4.20\cdot 10^{-7}$ & $4.17\cdot 10^{-7}$  & $4.26\cdot 10^{-7}$\\ \hline
$100$ & $3.24\cdot 10^{-10}$ & $3.22\cdot 10^{-10}$  & $3.27\cdot 10^{-10}$\\
\specialrule{2pt}{0pt}{0pt}
\end{tabular}
\caption{The probability of each variable to be the maximum.}
\label{table1}
\end{table} 
\end{centering}

Similarly to Proposition \ref{FiniteContinuous1}, 
we may   use the results for an infinite sequence of random variables to estimate the  analogous probabilities in the finite versions. (Again, as in Proposition~\ref{FiniteContinuous1}, the error can actually be reduced.)
\begin{proposition}\label{FiniteContinuous2}
For  fixed $m\ge 1$, as $n\to\infty$
\begin{description}
\item{(a)}
$$
P(W_{n-m}=W_{(n)})= P(X_{m}=M)+O\left(\frac 1n\right).
$$
\item{(b)}
$$
P\left(D_{n-m}=D_{(n)}\right)= P(X_{m}=M)+o(1).
$$
\end{description}
\end{proposition}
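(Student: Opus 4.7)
For part (a), the plan is to convert the statement into a ``total probability'' identity and then invoke the super-polynomial decay of $P(X_m=M)$ from Theorem~\ref{estimatePmTheorem}. Using the coupling $X_i=W_{n-i}/n$ already introduced in Section~\ref{sec:Results}, the event $\{W_{n-m}=W_{(n)}\}$ becomes $\{X_m=\max_{1\le i\le n}X_i\}$, which contains $\{X_m=M\}$ since the infinite supremum imposes strictly more inequalities. Hence each difference $P(W_{n-m}=W_{(n)})-P(X_m=M)$ is non-negative. Summing over $m=1,\dots,n$ and using $\sum_{m=1}^{n}P(W_{n-m}=W_{(n)})=1=\sum_{m=1}^{\infty}P(X_m=M)$ (a.s.\ uniqueness of the maxima in both the finite and the infinite models),
\[
\sum_{m=1}^{n}\bigl[P(W_{n-m}=W_{(n)})-P(X_m=M)\bigr]=\sum_{m>n}P(X_m=M),
\]
and Theorem~\ref{estimatePmTheorem} makes the right-hand side $O(\sqrt n\,e^{-\pi\sqrt{2n/3}})$, certainly $O(1/n)$ (in fact $o(1/n^k)$ for every $k$). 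Non-negativity of each term on the left then finishes (a); there is essentially no obstacle beyond invoking Theorem~\ref{estimatePmTheorem}.

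For part (b), the plan is to compare the discrete probability directly with the integral expression for $P(X_m=M)$ via a Riemann-sum approximation. Writing
\[
P\bigl(D_{n-m}>D_{n-i},\ \forall i\ne m\bigr)=\sum_{k=2}^{\infty}\tfrac{m}{n}\Bigl(1-\tfrac{m}{n}\Bigr)^{k-1}\prod_{\substack{1\le i\le n-1\\ i\ne m}}\Bigl(1-\bigl(1-\tfrac{i}{n}\bigr)^{k-1}\Bigr),
\]
and making the change of variable $t=(k-1)/n$, the summand times $n$ converges pointwise to $m e^{-mt}\prod_{i\ne m,\,i\ge1}(1-e^{-it})$, while the sum is dominated by $\sum_k(m/n)(1-m/n)^{k-1}=1$ with each product factor lying in $[0,1]$. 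Combining pointwise convergence with a dominated-convergence / Riemann-sum estimate gives convergence of the sum to $\int_0^{\infty}m e^{-mt}\prod_{i\ne m}(1-e^{-it})\,dt=P(X_m=M)$. Finally, the probability that the maximum of the $D_{n-i}$'s is attained by more than one index is $o(1)$---since the continuous limit has no ties while the discretization error at any single relevant index is $O(1/n)$---so the strict-inequality and the equality versions of ``$D_{n-m}=D_{(n)}$'' differ by $o(1)$, yielding $P(D_{n-m}=D_{(n)})=P(X_m=M)+o(1)$.

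The main obstacle in (b) is controlling the product $\prod_{i\ne m,\,i\le n-1}(1-(1-i/n)^{k-1})$ uniformly in $n$ and $k$: for indices $i$ close to $n$, the corresponding factor is close to $1$ rather than to $(1-e^{-it})$, so one must argue that these tail factors do not distort the limit. The remedy is analogous to part~(a): for $t$ bounded away from $0$, the tail $\prod_{i>N}(1-e^{-it})$ is close to $1$ uniformly in $n$ once $N$ is large, so one may first fix a large cutoff $N$, establish joint weak convergence of $(D_{n-i}/n)_{i\le N}$ to $(X_i)_{i\le N}$ by the routine scaling $p D \Rightarrow \textmd{Exp}(1)$ for $D\sim G(p)$ as $p\to0$, and then let $N\to\infty$ using the super-polynomial tail of $P(X_m=M)$ provided by Theorem~\ref{estimatePmTheorem}.
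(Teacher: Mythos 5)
Your part (b) follows essentially the same route as the paper: write $P\bigl(D_{n-m}>\max_{i\ne m}D_{n-i}\bigr)$ as a series, recognize it as a Riemann sum for $\int_0^\infty me^{-mt}\prod_{i\ne m}(1-e^{-it})\,dt=P(X_m=M)$, and justify the limit by a cutoff/uniform-convergence argument; your extra remarks about ties and about the factors with $i$ close to $n$ only make explicit what the paper's appeal to uniform convergence over $j\in[0,\gamma n]$ leaves implicit. Part (a), however, is genuinely different. The paper works with the integral representation $p_{m,n}=\int_0^\infty me^{-my}(1-e^{-my})^{-1}\prod_{j=1}^n(1-e^{-jy})\,dy$ and controls the error by inserting the explicit bound $F_{W_{(n)}/n}(t)-F_M(t)\le\prod_{j=2}^n(1-e^{-jt})\,e^{-(n+1)t}$ from the proof of Proposition~\ref{FiniteContinuous1}, arriving at an error of at most $m/(m+n+1)$. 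You instead note that, under the coupling $X_i=W_{n-i}/n$, each difference $P(W_{n-m}=W_{(n)})-P(X_m=M)$ is non-negative, and that these differences sum over $1\le m\le n$ to the tail $\sum_{m>n}P(X_m=M)$ (using almost-sure uniqueness of the maximizer in both models), which Theorem~\ref{estimatePmTheorem} makes super-polynomially small. This is correct, shorter, uniform in $m$, and in fact yields a much stronger bound than the stated $O(1/n)$ --- consistent with the paper's remark that the true errors are smaller --- but it leans on Theorem~\ref{estimatePmTheorem}, whereas the paper's computation is elementary and self-contained; there is no circularity, since the proof of Theorem~\ref{estimatePmTheorem} does not use Proposition~\ref{FiniteContinuous2}.
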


\bigskip

\bigskip

\section{Hurwitz's zeta function at special points}
Now we want to prove some formulas for Hurwitz's zeta function $z(s,a)$, defined in (\ref{def-hurwitz-zeta}). Not all formulas in this section are new, but we bring them here to make the calculations as self-contained as possible. It is well-known that, similarly to the Riemann zeta function,~$z(s,a)$ is analytic in the entire complex plane with one simple pole at $s=1$.

Define sequence of polynomials $\left(p_j\right)_{j=1}^{\infty}$ and $\left(q_j\right)_{j=1}^{\infty}$ inductively by $p_1(x) = \frac{x - x^2}2$ and
\begin{equation}\label{p_j q_j}
q_j(x) = \int_0^x p_j(t) dt,\qquad  p_{j+1}(x) = q_j(x) - xq_j(1)
\end{equation}
for $j\ge 1$.

\begin{lemma}\label{KolesnikLemma1} Hurwitz's zeta function satisfies the following properties:
\begin{description}
    \item{(a)} $\zeta(0, a) = \dfrac12 - a.$
    \item{(b)} $\zeta(-1, a) = \dfrac{a - a^2}{2} - \dfrac1{12}.$
    \item{(c)} 
    For every $s$ and $k=0,1,2,\ldots$: 
    \begin{equation}\label{ThmKolesnikItemC}
        \begin{aligned}
         \zeta(s, a) = &\:\frac{a^{1-s}}{s-1} + \frac12 a^{-s} + \sum_{j= 1}^{k + 1} s(s+1) \cdots (s + j - 1) q_j(1) a^{-s-j} \\
         &+\: s(s+1) \cdots (s + k + 2) \int_0^1 p_{k + 2}(x) \zeta(s+k+3, x + a)dx
        \end{aligned}
    \end{equation}
    \item{(d)} $\zeta(-k, a) = -\dfrac{a^{k+1}}{k+1} + \frac12 a^k + \sum_{j= 1}^{k}(-1)^j q_j(1)\dfrac{k!}{(k-j)!}a^{k-j},\qquad k\ge 2.$
    \item{(e)}
    For every $s$ and $k=0,1,2,\ldots$:
    \begin{equation}\label{Lemma1Label1}
        \begin{aligned}\zeta(s) =&\: \frac1{s - 1} + \frac12 + \sum_{j = 1}^{k + 1} s(s+1) \cdots (s + j - 1) q_j(1) \\
        &+\: s(s+1) \cdots (s + k + 2) \int_0^1 p_{k + 2}(x) \zeta(s + k + 3, x + 1) dx.\end{aligned}
    \end{equation}
    \item{(f)} For $k=0,1,2,\ldots$:
    \begin{equation}\label{Lemma1Label2}
    \begin{aligned}
    \zeta'(-k) =&\: -\frac1{(k + 1)^2} + \sum_{j = 1}^{k} (-1)^{j+1} \frac{k!}{(k - j)!} q_j(1) \left[H_k-H_{k-j}\right]\\ 
    &\:+ (-1)^k k! \cdot \int_0^1 p_{k + 1}(x) \zeta(2, x+1) dx.
    \end{aligned}
    \end{equation}
    \item{(g)} 
    \begin{equation*}
    \zeta'(0) = -1 + \int_0^1 p_1(x) \zeta(2,x+1) dx.
    \end{equation*}
    \end{description}
\end{lemma}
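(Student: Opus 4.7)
My plan is to prove (c) first as the master formula; all of the remaining parts follow from it by specialization or by differentiation.

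For (c) I would argue by induction on $k$. For the base case (taking $k=-1$, so that the sum is empty and the remainder integral is the one attached to $p_1$), start from the Euler--Maclaurin formula applied to $f(x)=(x+a)^{-s}$ on $[0,\infty)$:
$$\zeta(s,a) = \frac{a^{1-s}}{s-1} + \frac{a^{-s}}{2} - s\int_0^\infty \bigl(\{x\}-\tfrac12\bigr)(x+a)^{-s-1}\,dx.$$
The function $p_1(\{x\})$ is continuous on $[0,\infty)$, vanishes at every integer, and satisfies $\frac{d}{dx}p_1(\{x\})=\tfrac12-\{x\}$ between integers, so one integration by parts (with no boundary contribution) rewrites the remainder as $s(s+1)\int_0^\infty p_1(\{x\})(x+a)^{-s-2}\,dx$. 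Splitting the half-line into unit intervals and substituting $y=x-n$ on each collapses this to $s(s+1)\int_0^1 p_1(y)\,\zeta(s+2,y+a)\,dy$, which is exactly the base case.

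For the inductive step, assume (c) holds at level $k-1$. The recurrence $p_{j+1}(x)=q_j(x)-xq_j(1)$ immediately gives $p_{k+2}'(x)=p_{k+1}(x)-q_{k+1}(1)$, and also (by a short induction) $p_{k+2}(0)=p_{k+2}(1)=0$. I would then split
$$\int_0^1 p_{k+1}(x)\zeta(s+k+2,x+a)\,dx = \int_0^1 p_{k+2}'(x)\zeta(s+k+2,x+a)\,dx + q_{k+1}(1)\int_0^1 \zeta(s+k+2,x+a)\,dx.$$
The first integral becomes $(s+k+2)\int_0^1 p_{k+2}(x)\zeta(s+k+3,x+a)\,dx$ after one clean IBP (boundary terms vanishing thanks to $p_{k+2}(0)=p_{k+2}(1)=0$), while the second equals $a^{-s-k-1}/(s+k+1)$ by antidifferentiating and using $\zeta(u,1+a)=\zeta(u,a)-a^{-u}$. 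Multiplying through by $s(s+1)\cdots(s+k+1)$ yields precisely the new summand at $j=k+1$ and the updated remainder integral of (c) at level $k$.

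Parts (a), (b), and (d) then come from substituting $s=0,-1,-k$ respectively into (c) and noting that for each of these values the descending product $s(s+1)\cdots(s+j-1)$ vanishes once $j$ passes the critical index, killing the remainder integral and all too-high sum terms. Part (e) is (c) specialized to $a=1$ (using $\zeta(s)=\zeta(s,1)$). For (f), I would differentiate the $(k-1)$-version of (e) with respect to $s$ and evaluate at $s=-k$: for $j\le k$ the product $s(s+1)\cdots(s+j-1)$ is nonzero at $s=-k$, and logarithmic differentiation supplies the factor $\sum_{i=0}^{j-1}\frac{1}{s+i}\big|_{s=-k}=-(H_k-H_{k-j})$, yielding the stated sum; the remainder prefactor $s(s+1)\cdots(s+k+1)$ has a simple zero at $s=-k$ coming from the factor $s+k$, so only the derivative of that prefactor survives, contributing $(-1)^k k!$ times the integrand evaluated at $s+k+2=2$, i.e. $\int_0^1 p_{k+1}(x)\zeta(2,x+1)\,dx$. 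Part (g) is just the $k=0$ case of (f).

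The only step that I expect to require genuine care is the base case of (c) -- the Euler--Maclaurin expansion, the integration by parts that passes to $p_1(\{x\})$, and the telescoping over unit intervals that compresses the half-line integral into $\int_0^1 p_1(y)\,\zeta(s+2,y+a)\,dy$. Once that is in hand, the remainder of the lemma is mechanical bookkeeping driven by the two key facts $p_j(0)=p_j(1)=0$ and the recurrence $p_{j+1}=q_j-xq_j(1)$.
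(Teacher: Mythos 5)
Your proposal is correct and follows essentially the same route as the paper: an Euler--Maclaurin-type iterated integration by parts generating the $q_j(1)$ coefficients, induction for (c), specialization of $s$ and $a$ for (a), (b), (d), (e), and logarithmic differentiation at the simple zero coming from the factor $s+k$ for (f) and (g). The only (harmless) reorganization is that you prove the master formula (c) first and read off (a) and (b) as special cases, and your inductive step uses $p_{k+2}'=p_{k+1}-q_{k+1}(1)$ together with $\int_0^1\zeta(s+k+2,x+a)\,dx=a^{-s-k-1}/(s+k+1)$, where the paper integrates against $dq_{k+2}$ and reapplies its basic integral representation --- equivalent computations.
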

\begin{proof}
\begin{description}
    \item{$(a)$} Assume first that $\Re \,\mathit{s} > 1$. Using Stieltjes integration, we obtain
\begin{equation}\label{eqI}
\begin{aligned}
  \zeta(s, a) & =  \int_{0^{-}}^{\infty}\frac{d[x]}{(x + a)^s} \\
  & = \int_0^{\infty} \frac{dx}{(x + a)^s} - \int_{0^{-}}^{\infty}\frac{d\{x\}}{(x + a)^s} \\
  & = \left.\frac{(x + a)^{1 - s}}{1 - s}\right|_{x = 0^{-}}^{\infty} - \left.\frac{\{x\}}{(x + a)^{s}}\right|_{x = 0^{-}}^{\infty} - s \int_0^{\infty}\frac{\{x\}dx}{(x + a)^{s + 1}} \\
  & = \frac{a^{1 - s}}{s - 1} + a^{-s} - s\int_0^1 x \zeta(s + 1, x + a)dx 
\end{aligned}
\end{equation}

Since the right-hand side is analytic for $\Re\, \mathit{s} > 0$, this defines $\zeta(s, a)$ for $\Re\, \mathit{s} > 0$.

Multiplying both sides of \Eq{eqI} by $(s - 1)$ and taking the limit as $s \rightarrow 1$, we obtain $\lim_{s \rightarrow 1} \zeta(s, a)(s - 1) = 1$ for any $a > 0$.

Passing to the limit as  ${s \rightarrow 0^{+}}$, we get 
$$
\zeta(0, a) = 1 - a - \int_0^1 xdx = \frac12 - a.
$$

\item{$(b)$} We have 
\begin{equation*}
\begin{aligned}
\int_0^1 x \zeta(s + 1, x + a) dx & =  \sum_{n = 0}^{\infty}\int_0^1 \frac{x dx}{(n + x + a)^{s + 1}} 
= \sum_{n=0}^{\infty}\int_0^1\frac{dx^2/2}{(n + x + a)^{s + 1}} \\
&= \sum_{n = 0}^{\infty} \left[\left.\frac{x^2}{2(n + x + a)^{s + 1}}\right|_{x = 0}^{1} + (s + 1) \int_0^1 \frac{x^2 dx}{2(n + x + a)^{s + 2}}\right]\\
& =  \frac12 \zeta(s+1, a+1) + \frac{s + 1}2 \int_0^1 x^2 \zeta(s + 2, x + a) dx.
\end{aligned}
\end{equation*}
Use \Eq{eqI} with $(s + 1)$ instead of $s$ to replace $\zeta(s + 1, a + 1)$, and the equality
$$\zeta(s + 1, a + 1) = \zeta(s + 1, a) - a^{-s -1}$$
to obtain 
\begin{equation*}
    \begin{aligned}
 \zeta(s, a) =&\: \frac{a ^{1 - s}}{s - 1} + a^{-s} - \frac s2 \left[\frac{a^{-s}}{s} + a^{-s - 1} - a^{-s -1} - (s + 1)\int_0^1 x \zeta(s + 2, x + a) dx\right]\\
 &-\: \frac{s(s + 1)}2 \int_0^1 x^2 \zeta(s+2, x+a)dx\\
 =&\: \frac{a^{1 - s}}{s - 1} + a^{-s} - \frac12a^{-s} + s(s + 1) \int_0^1 \frac{x - x^2}2 \zeta(s + 2, x + a)dx.    
    \end{aligned}
\end{equation*}

We may write the above formula in the form 
\begin{equation}\label{eqII}
  \zeta(s, a) = \frac{a^{1-s}}{s-1} + \frac12 a^{-s} + s(s + 1) \int_0^1p_1(x)\zeta(s + 2, x+a)dx. 
\end{equation}

Using the equality $\lim_{s \rightarrow -1}(s + 1) \zeta(s + 2, x + a) = 1$, and taking the limit as~$s \rightarrow -1$, we obtain
$$
\zeta(-1, a) = -\frac{a^2}2 + \frac a2 - \int_0^1 p_1(x) dx = -\frac{a^2}2 + \frac a2 - \frac1{12}.
$$
\item{$(c)$} We prove (\ref{ThmKolesnikItemC}) by induction:
\begin{equation}\label{eqIII}
\begin{aligned}
  \zeta(s, a) =&\: \frac{a^{1-s}}{s - 1} + \frac12 a^{-s} + \sum_{j = 1}^{k + 1} s(s + 1) \cdots (s + j - 1) q_j(1) a^{-s -j} \\
  &  +\: s(s + 1) \cdots (s + k + 2) \int_0^1 p_{k + 2}(x) \zeta(s + k + 3, x + a) dx. 
 \end{aligned}
\end{equation}
For $k = 0$, by \Eq{eqII}:
\begin{align*}
  \zeta(s, a) =&\: \frac{a^{1 - s}}{s - 1} + \frac12 a^{-s} + s(s + 1) \sum_{n = 0}^{\infty} \left[\left.\frac{q_1(x)}{(n + x + a)^{s + 2}}\right|_{x = 0}^{1}\right.\\
  &+\:\left. (s + 2) \int_0^1 q_1(x) \frac{dx}{(n + x + a)^{s + 3}} \right] \\
  = &\:\frac{a^{1 - s}}{s - 1} + \frac12 a^{-s} + s(s + 1) q_1(1) \left[\zeta(s + 2, a) - a^{-s -2}\right] \\
  &+\: s(s + 1)(s + 2) \int_0^1 q_1(x) \zeta(s + 3, a + x)dx
\end{align*}
Using \Eq{eqI} with $(s + 2)$ instead of $s$, we obtain
\begin{equation*}
\begin{aligned}
  \zeta(s, a)  = &\: \frac{a^{1-s}}{s-1} + \frac12 a^{-s} \nonumber \\
  & +\: s (s + 1) q_1(1) \left[\frac{a^{-s-1}}{(s + 1)} + a^{-s -2} - a^{-s-2} - (s + 2) \int_0^1 x \zeta(s + 3, x + a)dx \right] \nonumber \\
  & +~ s(s + 1)(s + 2) \int_0^1 q_1(x) \zeta(s + 3, x + a) dx \\
  =&\: \frac{a ^{1-s}}{s-1} + \frac12 a^{-s} + sq_1(1) a^{-s -1}\\
  &+\: s(s+1)(s+2)\int_0^1 \left[q_1(x) - xq_1(1)\right] \zeta(s + 3, x + a) dx.
  \end{aligned}
\end{equation*}Since $q_1(x) - xq_1(1) = p_2(x)$, this proves the formula for $k = 0$.

Now  assume the formula to be true for some $k \geq 0$ and prove it for $k + 1$. Just as above, we have
\begin{equation}\label{eqIII2}
    \begin{aligned}
  \int\limits_0^1 p_{k + 2}(x) \zeta(s + k + 3, x + a) dx 
  =&\:  \sum_{n = 0}^{\infty} \int_0^1 \frac{dq_{k + 2}(x)}{(n + x + a)^{s + k + 3}} \\
   =&\:  \sum_{n = 0}^{\infty} \left[\left.\frac{q_{k + 2}(x)}{(n + x + 2)^{s + k + 3}}\right|_{x = 0}^1 \right] \\
   &+~ (s + k + 3)\int_0^1 q_{k + 2}(x) \zeta(s + k + 4, x + a) dx \\
    =&\:  q_{k + 2}(1) \left[ \zeta(s + k + 3, a) - a^{-s -k -3} \right] \\
    &+\: (s + k + 3) \int_0^1 q_{k + 2}(x) \zeta(s + k + 4, x + a) dx.
        \end{aligned}
\end{equation}
Using \Eq{eqI} with $s + k + 3$ instead of $s$, the right-hand side of (\ref{eqIII2}) becomes
\begin{align*}
  q_{k + 2}(1) &\left[\frac{a ^{-s -k -2}}{s + k + 2} - (s + k + 3)\int_0^1 x  \zeta(s + k + 4, x + a)dx\right]\\
  &+ (s + k + 3) \int_0^1 q_{k + 2}(x) \zeta(s + k + 4, x + a)dx \\
    & =  q_{k + 2}(1) \frac{a^{-s-k-2}}{s + k + 2} + (s + k + 3) \int_0^1 p_{k + 3}(x) \zeta(s + k + 4, x + a) dx.
\end{align*}

Substituting in  \Eq{eqIII}, we obtain: 
\begin{equation}\label{eqIV}
\begin{aligned}
  \zeta(s, a)  = &\: \frac{a^{1 - s}}{s - 1} + \frac{a^{-s}}{2} + \sum_{j = 1}^{k + 2} s(s + 1) \cdots (s + j + 1) q_j(1) a^{-s-j}  \\
  & +~ s(s + 1) \cdots (s + k + 3) \int_0^1 p_{k + 3}(x) \zeta(x + k + 4, x + a) dx 
\end{aligned}
\end{equation}
This completes the proof of (\ref{ThmKolesnikItemC}).

\item[$(d)$] The preceding part provides a formula for the analytic continuation of $\zeta(s, a)$ for $\Re\, \mathit{s > -k}-2$. Putting $s = -k$, we obtain
\begin{equation} \label{eqV}
\zeta(-k, a)  = - \frac{a^{k + 1}}{k + 1} + \frac12 a^k + \sum_{j = 1}^{k} (-1)^{j} q_j(1)a^{k-j}\frac{k!}{(k-j)!}.
\end{equation}
\item{$(e)$} Follows from (\ref{eqIV}) by taking  $a = 1$.
\item{(f)} Follows from \Eq{Lemma1Label1} for $s=-k$.
\item{$(g)$} Follows from \Eq{Lemma1Label2} for $k=0$.
\end{description}
\end{proof}

\begin{corollary}
Let $B_n$ be the $n$-th Bernoulli number. Then $B_0 =1$, $B_1 =-1/2$, and for any $n\ge2$ we have 
\begin{equation}\label{qjByBernoulli}
    B_n = (-1)^n q_{n-1}(1) n!.
\end{equation}
\end{corollary}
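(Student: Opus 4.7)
My plan is to dispose of $B_0 = 1$ and $B_1 = -1/2$ immediately by reading them off the generating function
\[
\frac{z}{e^z - 1} \;=\; 1 - \frac{z}{2} + O(z^2),
\]
and then to prove $B_n = (-1)^n q_{n-1}(1)\, n!$ for $n \ge 2$ by strong induction on $n$.

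For the base case $n = 2$, I would simply compute: $q_1(1) = \int_0^1 \tfrac{t - t^2}{2}\, dt = \tfrac{1}{12}$, so $(-1)^2 q_1(1)\cdot 2! = \tfrac{1}{6} = B_2$. (Alternatively, one can cross-check this against Lemma \ref{KolesnikLemma1}(b) with $a = 1$ together with the classical value $\zeta(-1) = -B_2/2 = -1/12$.)

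For the inductive step, suppose $n \ge 3$ and that $(-1)^j q_j(1) = -B_{j+1}/(j+1)!$ holds for every $1 \le j \le n-2$. I would specialize Lemma \ref{KolesnikLemma1}(d) to $a = 1$, $k = n-1$, using $\zeta(s, 1) = \zeta(s)$ and the standard evaluation $\zeta(1-n) = -B_n/n$ (valid for $n \ge 2$ in the present convention), to get
\[
-\frac{B_n}{n} \;=\; -\frac{1}{n} + \frac{1}{2} + \sum_{j=1}^{n-1} (-1)^j q_j(1)\, \frac{(n-1)!}{(n-1-j)!}.
\]
Next I would isolate the $j = n-1$ term and substitute the inductive hypothesis into the remaining $j = 1,\ldots,n-2$ terms. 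Reindexing by $i = j + 1$ rewrites the rest of the sum as $-\tfrac{1}{n}\sum_{i=2}^{n-1} \binom{n}{i} B_i$. The classical Bernoulli recursion $\sum_{i=0}^{n-1}\binom{n}{i}B_i = 0$ for $n \ge 2$ (a consequence of $B_n(1) = B_n$) gives $\sum_{i=2}^{n-1}\binom{n}{i}B_i = \tfrac{n}{2} - 1$, and after the resulting cancellation only $(-1)^{n-1} q_{n-1}(1)(n-1)! = -B_n/n$ survives, which rearranges to the claimed identity.

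There is no real obstacle beyond careful bookkeeping of signs and index shifts when substituting the inductive hypothesis; the Bernoulli recursion is tailor-made to produce precisely the telescoping that is needed. If one preferred to avoid induction, an alternative would be to assemble the generating function $F(z) = \sum_{n \ge 1} q_n(1)\, z^{n+1}$ directly from the defining relations $q_j(x) = \int_0^x p_j(t)\,dt$ and $p_{j+1}(x) = q_j(x) - x q_j(1)$, and show that $F(z) = z/(e^z - 1) - 1 + z/2$; however, the inductive route above is shorter and entirely self-contained given Lemma \ref{KolesnikLemma1}(d).
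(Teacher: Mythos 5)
Your argument is correct, but it takes a genuinely different route from the paper. The paper invokes the identity $\zeta(-n,a) = -B_{n+1}(a)/(n+1)$ (Apostol, Thms.~12.12--12.13), expands the Bernoulli polynomial, and compares coefficients of each power of $a$ against the polynomial expression for $\zeta(-n,a)$ in Lemma~\ref{KolesnikLemma1}(d); this identifies every $q_{k-1}(1)$ simultaneously in one shot, with no induction. You instead specialize Lemma~\ref{KolesnikLemma1}(d) to the single point $a=1$, so that the only external input is the Riemann-zeta value $\zeta(1-n) = -B_n/n$ (a weaker fact than the full Hurwitz--Bernoulli-polynomial identity), and you recover the top coefficient by strong induction, using the classical recursion $\sum_{i=0}^{n-1}\binom{n}{i}B_i = 0$ to collapse the lower-order terms. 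I checked the bookkeeping: the reindexed sum is indeed $-\tfrac1n\sum_{i=2}^{n-1}\binom{n}{i}B_i = -\tfrac12 + \tfrac1n$, which cancels the explicit $-\tfrac1n+\tfrac12$, leaving $(-1)^{n-1}q_{n-1}(1)(n-1)! = -B_n/n$ as claimed, and the base case $q_1(1)=1/12$ is right. The trade-off is that the paper's coefficient comparison is shorter and yields $B_0$ and $B_1$ from the same identity, whereas your induction is more elementary in its inputs but requires the Bernoulli recursion as an additional (standard) ingredient; as a minor point, your route also silently reproves consistency with the recursion rather than exploiting the full strength of the polynomial identity already available.
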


\begin{proof}
By \citep[Thm. 12.12 and 12.13]{Apostol}
$$\zeta(-n,a) = - \frac{B_{n+1}(a)}{n+1} = -\frac{1}{n+1} \sum_{k=0}^{n+1}  \binom{n+1}{k} \cdot B_k
\cdot a^{n+1-k}.
$$
Comparing this to the formula for $\zeta(-n,a)$ in Lemma \ref{KolesnikLemma1}.(c), we get
\begin{equation*}
    -\frac{1}{n+1}\label{qjByBernoulliLabel2} \sum_{k=0}^{n+1}  \binom{n+1}{k} \cdot B_k
\cdot a^{n+1-k}=-\frac{a^{n+1}}{n+1}+\frac{1}{2} a^n+\sum_{k=1}^n (-1)^k q_k(1)\frac{ n!}{(n-k)!}a^{n-k}.
\end{equation*}
 Since these two sides, considered as polynomials in $a$, are identical, the coefficients of corresponding terms must be equal.  
 Equating the coefficient of $a^{n+1}$ we obtain  $-B_0/(n+1)=-1/(n+1)$, thus, $B_0=1$.  Equating the coefficient of $a^n$ we obtain $ -B_1=1/2$, and thus $B_1=-1/2$. Equating the coefficient of $a^{n+1-k}$ for  $k\ge2$, we obtain $- n!/(k!(n+1-k)!)\cdot B_k=(-1)^{k-1}q_{k-1}(1) n!/(n+1-k)!$, and therefore $B_k =(-1)^k  q_{k-1}(1) \cdot n!$ .
\end{proof}

\begin{Proof}{\ of Theorem \ref{KolesnikTheorem1}} Using the functional equation for Hurwitz's zeta function with~$s = k + 1$ and $k = 2m$, $m \geq 1$, we obtain 
\begin{equation*}
    \begin{aligned}
\zeta(-2m, a) &= \frac{\Gamma(2m+1)}{(2\pi)^{2m + 1}} \left[e^{-\frac{\pi i (2m + 1)}{2}} \sum_{n = 1}^{\infty} \frac{e^{2\pi i n a}}{n^{2m + 1}} + e^{\frac{\pi i (2m + 1)}2} \sum_{n = 1}^{\infty} \frac{e^{-2\pi i n a}}{n^{2m + 1}}\right] \\
&= \frac{2(2m)!(-1)^m}{(2\pi)^{2m + 1}}\sum_{n = 1}^{\infty}\frac{\sin(2\pi n a)}{n^{2m + 1}}.
    \end{aligned}
\end{equation*}
If $a =  p/q$ then, since $C_m(a) = \frac{(2\pi q)^{2m + 1}(-1)^m}{2(2m)!}\zeta(-2m, a)$, we obtain
\begin{equation}\label{eqVII}
\begin{aligned}
  C_m(a) & = \sum_{c = 1}^{q - 1}\sin(2\pi c a) \sum_{n = 0}^{\infty}\frac{q^{2m + 1}}{(nq + c)^{2m + 1}} 
   =  \sum_{c = 1}^{q - 1} \sin(2\pi c a) \zeta\left(2m + 1, \frac cq\right).
\end{aligned}
\end{equation}
Taking $a =  1/q,  2/q, \ldots, {(q - 1)}/q$, we get $q - 1$ equations with $q - 1$ unknowns $\zeta(2m + 1, \sfrac c q)$ $(c = 1, \ldots, q - 1)$ but the coefficient matrix has  rank at most ${(q - 1)}/{2}$.

We have one more equation for every $q$; $$\sum_{c = 1}^{q} \sum_{n = 0}^{\infty} \frac1{(nq + c)^s} = \zeta(s),
$$
and therefore
\begin{equation}\label{eqVIII}
\begin{aligned} 
  \sum_{c = 1}^{q - 1} \zeta\left(s,  \frac cq\right) &=\left(q^s - 1\right)\zeta(s),  \\
  \sum_{c = 1}^{q - 1} \zeta\left(2m + 1,  \frac cq\right)& = \left(q^{2m + 1} - 1\right)\zeta(2m + 1).
\end{aligned}
\end{equation}

Thus, for $q = 2$ we get
\begin{equation}\label{eqIX}
  \zeta\left(k, \frac 12\right) = (2^k - 1) \zeta(k),\qquad k\ge 2.
\end{equation}
For $q = 3$, we get by \Eq{eqVII}
\begin{equation*}
\begin{aligned}
  C_m\left(1/3\right) & = \frac{\sqrt 3}2 \zeta\left(2m + 1, \frac13\right) - \frac{\sqrt 3}2 \zeta\left(2m + 1, \frac23\right),
\end{aligned}
\end{equation*}
and by \Eq{eqVIII}
\begin{equation*}
\begin{aligned}
  C_m\left(1/3\right) & = \frac{\sqrt 3}2 
  (3^{2m + 1} - 1) \zeta(2m + 1) & = \zeta\left(2m + 1, \frac13\right) + \zeta\left(2m + 1, \frac23\right).
\end{aligned}
\end{equation*}
and
\begin{equation}\label{eqX}
\begin{aligned}
  \zeta\left(2m + 1, \frac13\right) & =  \frac{C_m(1/3)}{\sqrt 3} + \frac{3^{2m + 1} - 1}2\zeta(2m + 1), \\
  \zeta\left(2m + 1, \frac23\right) & = \frac{3^{2m + 1} - 1}2\zeta(2m + 1) - \frac{C_m(1/3)}{\sqrt 3}.
\end{aligned}
\end{equation}
For $q = 4$, we get again 2 equations with 2 unknowns: By \Eq{eqVII}
$$C_m\left(\frac14\right) = \zeta\left(2m + 1, \frac14\right) - \zeta\left(2m + 1, \frac{3}{4}\right),
$$
and by \Eq{eqVIII}-\Eq{eqIX}
\begin{equation*}
    \begin{aligned}
\zeta\left(2m + 1, \frac14\right) + \zeta\left(2m + 1, \frac34\right) &= (4^{2m + 1} - 1) \zeta(2m + 1) - \zeta\left(2m + 1, \frac12\right)\\
&= \left(4^{2m + 1} - 2^{2m + 1}\right) \zeta(2m + 1).
    \end{aligned}
\end{equation*}
Hence
$$
\zeta\left(2m + 1,\frac14\right) = \frac12C_m(1/4) + \frac12\left(4^{2m + 1} - 2^{2m + 1}\right)\zeta(2m + 1),
$$
and 
$$
\zeta\left(2m + 1, \frac34\right) = \frac12\left(4^{2m + 1} - 2^{2m + 1}\right)\zeta(2m + 1) - \frac12C_m(1/4).
$$

For $q = 6$, we get 2 equations with 2 unknowns (as $\zeta(2m + 1, 1/3)$ and $\zeta(2m + 1, 1/2)$ and $\zeta(2m + 1, \frac23)$ are known by  \Eq{eqIX} and \Eq{eqX}): 
\begin{equation*}
C_m(1/6) = \sum_{c = 1}^{5} \sin\frac{\pi c}3 \zeta\left(2m + 1, \frac c6\right)
\end{equation*}
and 
$$\sum_{c = 1}^{5}\zeta\left(2m + 1, \frac c6\right) = (6^{2m + 1} - 1)\zeta(2m + 1),$$
i.e., 
\begin{equation}\label{TheoreKolesLabel1}
\frac12 \left[\zeta\left(2m + 1, \frac16\right) + \zeta\left(2m + 1, \frac13\right) - \zeta\left(2m + 1, \frac23\right) - \zeta\left(2m + 1, \frac56\right)\right] = \frac{C_m(1/6)}{\sqrt 3},
\end{equation}
and 
\begin{equation}\label{TheoreKolesLabel2}
\frac12\sum_{c = 1}^{5} \zeta\left(2m + 1, \frac c6\right) = \frac{(6^{2m + 1} - 1)}2\zeta(2m + 1).
\end{equation}

Adding \Eq{TheoreKolesLabel1} and \Eq{TheoreKolesLabel2} by sides,   we get
\begin{equation*}
  \frac{6^{2m + 1} - 1}2 \zeta(2m + 1) + \frac{C_m(1/6)}{\sqrt 3}  = 
  \zeta\left(2m + 1, \frac16\right) + \zeta\left(2m + 1, \frac13\right) + \frac12\zeta\left(2m + 1, \frac12\right),
\end{equation*}
i.e.,
\begin{equation}\label{eqXI}
\begin{aligned}
  \zeta\left(2m + 1, \frac16\right)  =& \: \frac{C_m(1/6)}{\sqrt 3}\\
  &\:+ \left[\frac{6^{2m + 1} - 1}2 - \frac{3^{2m + 1} - 1}2 - \frac{2^{2m + 1} - 1}2\right]\zeta(2m + 1) - \frac{C_m(1/3)}{\sqrt 3}.
\end{aligned}
\end{equation}
Subtracting \Eq{TheoreKolesLabel1} from \Eq{TheoreKolesLabel2}, we obtain
\begin{equation*}
  \zeta\left(2m + 1, \frac23\right) + \zeta\left(2m + 1, \frac56\right) + \frac12\zeta\left(2m + 1, \frac12\right)  = 
  \frac{6^{2m + 1} - 1}2\zeta(2m + 1) - \frac{C_m(1/6)}{\sqrt 3},
\end{equation*}
i.e.,
\begin{equation}  \label{eqXII}
\begin{aligned}
  \zeta\left(2m + 1, \frac56\right) =&\:  \left[\frac{6^{2m + 1} - 1}2 - \frac{3^{2m + 1} - 1}2 - \frac{2^{2m + 1} - 1}2\right]\zeta(2m + 1)\\
  &\:- \frac{C_m(1/6)}{\sqrt 3} + \frac{C_m(1/3)}{\sqrt 3}.
\end{aligned}
\end{equation}
\qed
\end{Proof}


\bigskip

\section{The moments of the maximum}\label{sec:Proofs}
\begin{Proof}{\ of Theorem \ref{MeanVarOfMax}}
Denote by  $F_M$ the distribution function of  $M$. Clearly:
\begin{align*}
F_{{M}}(t)&
=P\left(\max_{1\le m<\infty}X_m\le t \right)=\prod_{m=1}^{\infty}\left( 1-e^{-mt}\right),\qquad t\ge 0.
\end{align*}
As $M$ is  positive,
\begin{equation}\label{Thm2Label1}
\begin{aligned}
E\left(M\right)&=\int_0^{\infty}\left(1-F_{M}(t)\right)dt=\int_0^{\infty}\left(1-\prod_{m=1}^{\infty}\left( 1-e^{-mt}\right)\right)dt\\
&=\int_0^{1}\left(1-\prod_{m=1}^{\infty}\left( 1-x^{m}\right)\right)\frac{dx}{x}.
\end{aligned}
\end{equation}
As noted by Brennan et al.\ \citep{Brennan}, Euler's Pentagonal Theorem (see, for example,  \citep[Cor. 1.7]{penthagonal})  enables us to rewrite the infinite product on the right-hand side of (\ref{Thm2Label1}) as a power series:
\begin{equation}\label{EulerPentagonal1}
\prod_{m=1}^{\infty}(1-x^m)=1+\sum_{m=1}^{\infty}(-1)^m\Big(x^{m(3m-1)/2}+x^{m(3m+1)/2}\Big),\qquad|x|<1.
\end{equation}
Thus, similarly to \citep[Thm. 2.1]{Brennan},
\begin{equation*}
\begin{aligned}
 E(M)&
 =\int_0^1\sum_{m=1}^{\infty}(-1)^{m+1}\Big(x^{m(3m-1)/2}+x^{m(3m+1)/2}\Big)\frac{dx}{x}\\
 &=\sum_{m=1}^{\infty}(-1)^{m+1}\left(\frac{2}{m(3m-1)}+\frac{2}{m(3m+1)}\right)\\
 &=6\sum_{m=1}^{\infty}(-1)^{m+1}\left(\frac{1}{3m-1}-\frac{1}{3m+1}\right).
\end{aligned}
\end{equation*}
Denote
\begin{equation}\label{d2Addend}
\begin{aligned}
   f(x)=\sum_{m=1}^{\infty}\left(\frac{x^{3m-1}}{3m-1}-\frac{x^{3m+1}}{3m+1}\right).
\end{aligned}
\end{equation}
Note that $E(M)=6f(-1).$ Now,
\begin{equation*}
\begin{aligned}
   f'(x)&=\sum_{m=1}^{\infty}\left({x^{3m-2}}-{x^{3m}}\right)=\left(x-x^3\right)\left(1+x^3+x^6+\cdots\right)\\
   &=\frac{x-x^3}{1-x^3}=\frac{1-x^3}{1-x^3}-\frac{1-x}{1-x^3}=1-\frac{1}{1+x+x^2}.
\end{aligned}
\end{equation*}
A simple integration yields
\begin{equation*}
\begin{aligned}
   f(x)=x-\frac{2}{\sqrt{3}}\arctan\left(\frac{2x+1}{\sqrt{3}}\right)+C.
\end{aligned}
\end{equation*}
Putting $f(0)=0$, we get $C=\pi/(3\sqrt{3})$. Thus
\begin{equation}\label{d1Addend}
\begin{aligned}
   f(-1)=-1-\frac{2}{\sqrt{3}}\arctan\left(\frac{-1}{\sqrt{3}}\right)+\frac{\pi}{3\sqrt{3}}
   =\frac{2\sqrt{3}\pi}{9}-1. 
\end{aligned}
\end{equation}
Altogether $E(M)=4\sqrt{3}\pi/3-6$.

\qed

\end{Proof}

The calculation of the infinite series, that will arise in the proof of Theorem \ref{explicit_k_th_moment} requires the partial fraction decomposition of several rational functions.
\begin{lemma}\label{FractionDecomposition} For every $k\ge 1$ and  $a\in \bf R$:
\begin{align*}
\frac{1}{x^k(1+ax)^k}
=&\:\sum_{j=1}^{k}\binom{2k-j-1}{k-1}\frac{(-a)^{k-j}}{x^j}
+\sum_{j=1}^{k}\binom{2k-j-1}{k-1}\frac{(-a)^k}{(1+ax)^j}.
\end{align*}
\end{lemma}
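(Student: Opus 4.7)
The plan is to produce the decomposition via the standard method for partial fractions at poles of arbitrary order. Since the only singularities of $\tfrac{1}{x^k(1+ax)^k}$ on the Riemann sphere are at $x=0$ and $x=-1/a$, each of order $k$, there exist unique constants $A_1,\dots,A_k$ and $B_1,\dots,B_k$ with
$$\frac{1}{x^k(1+ax)^k}=\sum_{j=1}^{k}\frac{A_j}{x^j}+\sum_{j=1}^{k}\frac{B_j}{(1+ax)^j},$$
so the task reduces to showing $A_j=(-a)^{k-j}\binom{2k-j-1}{k-1}$ and $B_j=(-a)^k\binom{2k-j-1}{k-1}$.

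For the coefficients at the pole $x=0$, I would apply the cover-up rule in its higher-order form,
$$A_j=\frac{1}{(k-j)!}\left.\frac{d^{k-j}}{dx^{k-j}}\bigl[x^k\cdot\text{LHS}\bigr]\right|_{x=0}=\frac{1}{(k-j)!}\left.\frac{d^{k-j}}{dx^{k-j}}(1+ax)^{-k}\right|_{x=0}.$$
Differentiating $(1+ax)^{-k}$ a total of $i$ times produces the rising factorial $k(k+1)\cdots(k+i-1)=(k+i-1)!/(k-1)!$ and a factor $(-a)^i$. Taking $i=k-j$ and dividing by $(k-j)!$ gives $A_j=(-a)^{k-j}\frac{(2k-j-1)!}{(k-j)!\,(k-1)!}=\binom{2k-j-1}{k-1}(-a)^{k-j}$, which matches the first sum.

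For the coefficients at the pole $x=-1/a$, I would introduce $u=1+ax$, so that $x=(u-1)/a$, and rewrite the identity as
$$\frac{a^k}{(u-1)^k u^k}=\sum_{j=1}^{k}\frac{a^{j}A_j}{(u-1)^j}+\sum_{j=1}^{k}\frac{B_j}{u^j}.$$
Now the computation is formally the same as before, applied to $a^k/[(u-1)^k u^k]$ at its pole $u=0$: $B_j=\frac{1}{(k-j)!}\left.\frac{d^{k-j}}{du^{k-j}}\frac{a^k}{(u-1)^k}\right|_{u=0}=\frac{(-1)^k a^k}{(k-j)!}\cdot\frac{(2k-j-1)!}{(k-1)!}=\binom{2k-j-1}{k-1}(-a)^k$, as required.

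There is no serious obstacle; the only thing to track carefully is the signs. The fact that the same binomial coefficient $\binom{2k-j-1}{k-1}$ governs both sums is transparent from this derivation, since after the substitution $u=1+ax$ the two poles play completely symmetric roles.
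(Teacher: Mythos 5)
Your proof is correct and follows essentially the same route as the paper: both arguments posit the general partial fraction decomposition and identify each coefficient as a Taylor coefficient of the complementary factor at the relevant pole, using the same substitution $u=1+ax$ for the second pole. The only cosmetic difference is that you extract these coefficients by repeated differentiation (the higher-order cover-up rule) while the paper reads them off the binomial series $(1+ax)^{-k}=\sum_{i\ge 0}\binom{k+i-1}{k-1}(-a)^i x^i$.
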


\begin{proof}
By partial fraction decomposition,
\begin{equation*}\label{Lemma7LabelA}
\begin{aligned}
\frac{1}{x^k(1+ax)^k}=&\:\frac{c_{1}}{x}+\frac{c_2}{x^2}+\cdots+\frac{c_{k}}{x^{k}}+\frac{d_1}{1+ax}+\frac{d_2}{(1+ax)^2}+\cdots+\frac{d_{k}}{(1+ax)^{k}},
\end{aligned}
\end{equation*}for suitable constants $c_1,\ldots,c_k,d_1,\ldots,d_k.$
Consider any $c_j$. By  \citep{MaYuWang2014}, it is the  coefficient of~$x^{k-j}$ in the Laurent expansion of~$(1+ax)^{-k}$:
\begin{align*}
    (1+ax)^{-k} & = \sum_{i=0}^{\infty} \binom{-k}{i} (ax)^i=\sum_{i=0}^{\infty} \binom{k+i-1}{k-1} (-a)^i x^i.
\end{align*}
Thus,
\begin{equation}\label{c coefficients}
    c_j=\binom{2k-j-1}{k-1}(-a)^{k-j},\qquad 1\le j\le k.
\end{equation}
Similarly, $d_j$ is the coefficient of  $(1+ax)^{k-j}$ in the Laurent expansion of $x^{-k}$ at $-1/a$. Let~$y=1+ax$, so that~$x=(1-y)/(-a)$. Then
\begin{align*}
    \frac{1}{x^k} & =(-a)^k(1-y)^{-k} = (-a)^k\sum_{i=0}^{\infty} \binom{-k}{i} (-y)^i=(-a)^k\sum_{i=0}^{\infty} \binom{k+i-1}{i} y^i,
\end{align*}
and therefore
\begin{equation}\label{d coefficients}
    d_j=(-a)^k\binom{2k-j-1}{k-1},\qquad 1\le j\le k.
\end{equation}
\end{proof}

\bigskip
\begin{Proof}{\ of Theorem \ref{explicit_k_th_moment}}
Using the same notations as in the proof of Theorem \ref{MeanVarOfMax},  by (\ref{EulerPentagonal1}) and \citep[p.\:150,\:(6.3)]{Feller1970}
\begin{equation*}
\begin{aligned}
E\left(M^k\right)&=\int_0^{\infty}kt^{k-1}\left(1-F_{M}(t)\right)dt\\
&=\sum_{m=1}^{\infty}(-1)^{m+1}\int_0^{\infty}kt^{k-1}\Big(e^{-tm(3m-1)/2}+e^{-tm(3m+1)/2}\Big)dt.
\end{aligned}
\end{equation*}
As  in the proof of \citep[Thm. 4.1]{Brennan}, 
\begin{equation}\label{theorem4label3}
    E(M^k)=k!2^k\sum_{m=1}^{\infty}(-1)^{m+1}\left(\frac{1}{m^k(3m-1)^k}+\frac{1}{m^k(3m+1)^k}\right).
\end{equation}Denote
\begin{equation}\label{BrennanThm41}
    g(k)=\sum_{m=1}^{\infty}(-1)^{m+1}\left(\frac{1}{m^k(3m-1)^k}+\frac{1}{m^k(3m+1)^k}\right).
\end{equation}
By Lemma \ref{FractionDecomposition},
\begin{equation*}
\begin{aligned}
    g(k)&=\sum_{m=1}^{\infty}(-1)^{m+1}\sum_{j=1}^{k}\left(\frac{(-1)^kc_j}{m^j}+\frac{c'_j}{m^j}+\frac{(-1)^kd_j}{(1-3m)^j}+\frac{d'_j}{(1+3m)^j}\right),
\end{aligned}
\end{equation*}
where 
\begin{equation}\label{Theorem4Label14}
\begin{aligned}
c_j&=\binom{2k-j-1}{k-1}3^{k-j}, \qquad c'_j=\binom{2k-j-1}{k-1}(-3)^{k-j},\\ d_j&=\binom{2k-j-1}{k-1}3^k,  \qquad \:\:\:\:d'_j=\binom{2k-j-1}{k-1}(-3)^k.
\end{aligned}
\end{equation}
Thus
\begin{equation}\label{f(k)}
\begin{aligned}
    g(k)=&\sum_{j=1}^{k}\left((-1)^kc_j+c'_j\right)\sum_{m=1}^{\infty}\frac{(-1)^{m+1}}{m^j}\\
    &+\:\sum_{j=1}^{k}(-1)^{k-j}d_j\sum_{m=1}^{\infty}\frac{(-1)^{m+1}}{(3m-1)^j}+\sum_{j=1}^{k}(-1)^k d_j\sum_{m=1}^{\infty}\frac{(-1)^{m+1}}{(3m+1)^j}\\
    =&\sum_{j=1}^{\lfloor k/2\rfloor }2(-1)^kc_{2j}\sum_{m=1}^{\infty}\frac{(-1)^{m+1}}{m^{2j}}\\
    &+\:\sum_{j=1}^{k}\left((-1)^{k} d_j\left((-1)^{j}\sum_{m=1}^{\infty}\frac{(-1)^{m+1}}{(3m-1)^j}+\sum_{m=1}^{\infty}\frac{(-1)^{m+1}}{(3m+1)^j}\right)\right).
\end{aligned}
\end{equation}
For $j\ge1$ by \citep[Thm. 13.11]{Apostol}
\begin{equation}\label{cj s}
\sum_{m=1}^{\infty}\frac{(-1)^{m+1}}{m^{2j}}=\zeta(2j)\left(1-2^{1-2j}\right).    
\end{equation}
Consider the second addend on the right-hand side of (\ref{f(k)}). For $j=1$, by (\ref{d2Addend}) and (\ref{d1Addend})
\begin{equation}
    \begin{aligned}
     \sum_{m=1}^{\infty}\frac{(-1)^{m+1}}{3m-1}-\sum_{m=1}^{\infty}\frac{(-1)^{m+1}}{3m+1}=\frac{1}9\left(2\sqrt{3}\pi-9\right).
    \end{aligned}
\end{equation}
For $j\ge2$
\begin{equation}\label{dTilde}
\begin{aligned}
\sum_{m=1}^{\infty}\frac{(-1)^{m+1}}{(3m-1)^j}&=\frac{1}{2^j}-\frac{1}{5^j}+\frac{1}{8^j}-\frac{1}{11^j}+\cdots =\frac{1}{6^j}\left(\zeta(j,\tfrac{1}{3})-\zeta(j,\tfrac{5}{6})\right),\\
\sum_{m=1}^{\infty}\frac{(-1)^{m+1}}{(3m+1)^j}&=\frac{1}{4^j}-\frac{1}{7^j}+\frac{1}{10^j}-\frac{1}{13^j}+\cdots =\frac{1}{6^j}\left(\zeta(j,\tfrac{2}{3})-\zeta(j,\tfrac{7}{6})\right)\\
&=\frac{1}{6^j}\left(\zeta(j,\tfrac{2}{3})-\zeta(j,\tfrac{1}{6})+6^j\right).
\end{aligned}
\end{equation}

Altogether, by (\ref{f(k)})-(\ref{dTilde})
\begin{equation}\label{Theorem4Label20}
\begin{aligned}
    g(k)= \:&\sum_{j=1}^{\lfloor k/2\rfloor }2(-1)^kc_{2j}\zeta(2j)\left(1-2^{1-2j}\right)-(-1)^kd_1\cdot\frac{1}9\left(2\sqrt{3}\pi-9\right)\\
    &\:+\sum_{j=2}^{k}\frac{(-1)^{k}d_j}{6^j}\Big((-1)^j\left(\zeta(j,\tfrac{1}{3})-\zeta(j,\tfrac{5}{6})\right)+\zeta(j,\tfrac{2}{3})-\zeta(j,\tfrac{1}{6})+6^j\Big).
\end{aligned}
\end{equation}
Now, (\ref{theorem4label3})
-(\ref{Theorem4Label14}) and (\ref{Theorem4Label20}) complete the proof.

\qed

\end{Proof}
\bigskip


\begin{Proof}{\ of Theorem \ref{explicit_k_th_momentFormula2}}Consider the third addend on the right-hand side of (\ref{MometsOfMFormula1}).
By \Eq{eqVIII} and \Eq{eqIX} 
\begin{equation}\label{Lemma8Label1}
\begin{aligned}
    \zeta(s,\tfrac{1}{2})=(2^s-1)\zeta(s)&=6^s \left(\frac{1}{3^{s}}-\frac{1}{6^s}\right)\zeta(s),\\
    \zeta(s,\tfrac{1}{3})+\zeta(s,\tfrac{2}{3})
    =(3^s-1)\zeta(s)&=6^s \left(\frac{1}{2^{s}}-\frac{1}{6^s}\right)\zeta(s).
\end{aligned}
\end{equation}We also have
\begin{equation}\label{Lemma8Label2}
\begin{aligned}
    \zeta(s,\tfrac{1}{6})+\zeta(s,\tfrac{5}{6})
    &=\sum_{i=1}^{5}\zeta(s,\tfrac{i}{6})-\left(\zeta(s,\tfrac{1}{3})+\zeta(s,\tfrac{2}{3})\right)-\zeta(s,\tfrac{1}{2})\\
    &=6^s \left(1-\frac{1}{6^s}\right)\zeta(s)-6^s \left(\frac{1}{2^{s}}-\frac{1}{6^s}\right)\zeta(s)-6^s \left(\frac{1}{3^{s}}-\frac{1}{6^s}\right)\zeta(s)\\
    &=6^s \left(1+\frac{1}{6^{s}}-\frac{1}{2^s}-\frac{1}{3^s}\right)\zeta(s).
\end{aligned}
\end{equation}
Thus, for even~$j$, by (\ref{Lemma8Label1}) and (\ref{Lemma8Label2}),
\begin{equation}\label{Even dj s}
\frac{1}{6^j}\Big(\zeta(j,\tfrac{1}{3})-\zeta(j,\tfrac{5}{6})+\zeta(j,\tfrac{2}{3})-\zeta(j,\tfrac{1}{6})+6^j\Big)
=1+\zeta(j)\Big(\frac{1}{2^{j-1}}+\frac{1}{3^j}-\frac{2}{6^j}-1\Big).
\end{equation}
For odd $j$, by Theorem \ref{KolesnikTheorem1}:
\begin{equation}\label{Odd dj s}
\begin{aligned}
-\zeta(j,\tfrac{1}{3})+\zeta(j,\tfrac{2}{3})+\zeta(j,\tfrac{5}{6})-\zeta(j,\tfrac{1}{6})
=-\frac{2K_{( j-1)/2} (6)}{\sqrt{3}},
\end{aligned}
\end{equation}
where $K_m(q)$ is defined in (\ref{Cm}). 
Using the same notation as in the proof of Theorem \ref{explicit_k_th_moment}, by \Eq{Theorem4Label20}
\begin{equation}\label{SimplificationSecond}
\begin{aligned}
    g(k)= (-1)^k\:&\left(\sum_{j=1}^{\lfloor k/2\rfloor }2c_{2j}\zeta(2j)\left(1-2^{1-2j}\right)-d_1\cdot\frac{2\sqrt{3}\pi}{9}+\sum_{j=1}^k d_j\right. \\
    &\:+\left. \sum_{j=1}^{\lfloor k/2\rfloor}d_{2j}\cdot\zeta(2j)\Big(\frac{2}{2^{2j}}+\frac{1}{3^{2j}}-\frac{2}{6^{2j}}-1\Big)
    -\sum_{j=1}^{\lfloor (k-1)/2\rfloor}\frac{d_{2j+1}}{6^{2j+1}}\frac{2K_j(6)}{\sqrt{3}}\right).
\end{aligned}
\end{equation}
The sum of the first and fourth addends on the right-hand side of (\ref{SimplificationSecond}) yields $\sum_{j=1}^{\lfloor k/2 \rfloor}g_j(k)$, where
\begin{equation*}
\begin{aligned}
g_j(k)=&\:\zeta(2j)\left(2c_{2j}\Big(1-\frac{2}{2^{2j}}\Big)- d_{2j}\Big(1-\frac{2}{2^{2j}}\Big)\Big(1-\frac{1}{3^{2j}}\Big)\right)\\
=&\:\zeta(2j)\Big(1-\frac{2}{2^{2j}}\Big)\left(2 \binom{2k-2j-1}{k-1}3^{k-2j}
- \binom{2k-2j-1}{k-1}3^k\Big(1-\frac{1}{3^{2j}}\Big)\right)\\
=&\:3^k\zeta(2j)\Big(1-\frac{2}{2^{2j}}\Big)\Big(\frac{3}{3^{2j}}-1\Big) \binom{2k-2j-1}{k-1}.
\end{aligned}
\end{equation*}
By, e.g., \citep[p. 266]{Apostol}, for $j\ge1$
$$
\zeta(2j)=\frac{(-1)^{j+1}\cdot 2^{2j-1}\pi^{2j}}{(2j)!}B_{2j},
$$
where $B_m$ is the  $m$-th Bernoulli  number. Thus
\begin{equation}
\begin{aligned}
g_j(k)&=3^k\frac{(-1)^{j+1}\cdot 2^{2j-1}\pi^{2j}}{(2j)!}B_{2j}\left(1-\frac{2}{2^{2j}}\right)\left(\frac{3}{3^{2j}}-1\right) \binom{2k-2j-1}{k-1}\\
&=3^k\frac{(-1)^{j}\cdot \pi^{2j}}{(2j)!}B_{2j}\left(2^{2j-1}-1\right)\left(1-\frac{3}{3^{2j}}\right) \binom{2k-2j-1}{k-1}.
\end{aligned}
\end{equation}
The second addend on the right-hand side of (\ref{SimplificationSecond}) is
\begin{equation}\label{SimpleficationSecond2}
\begin{aligned}
 d_{1}\frac{2\pi}{3\sqrt{3}}&=\frac{2\pi}{3\sqrt{3}} 3^k \binom{2k-1-1}{k-1}=3^{k}\frac{2\pi}{3\sqrt{3}}  \binom{2k-2}{k-1},
\end{aligned}
\end{equation}
and the third is
\begin{equation}\label{SimpleficationSecond1}
\begin{aligned}
\sum_{j=1}^{k} d_{j}&=\sum_{j=1}^{k} \binom{2k-j-1}{k-j}3^k
=3^k\cdot\frac{2k-1}{k}\binom{2k-2}{k-1}=3^k\binom{2k-1}{k}.
\end{aligned}
\end{equation}
The $j$-th term in the last addend on the right-hand side of (\ref{SimplificationSecond}) is the product of two terms. The first, for  $j\ge1$, is
\begin{equation}\label{SimpleficationSecond3}
\begin{aligned}
\frac{d_{2j+1}}{6^{2j+1}}=\frac{1}{6^{2j+1}}\binom{2k-(2j+1)-1}{k-1}3^k=\frac{3^k}{6^{2j+1}}\binom{2k-2j-2}{k-1},
\end{aligned}
\end{equation}
and the second is
\begin{equation}\label{SimpleficationSecond4}
\begin{aligned}
\frac{2K_j({6})}{\sqrt{3}}&=\frac{2}{\sqrt{3}}\frac{(2\pi )^{2j+1}(-1)^j}{2(2j)!}\left(-\frac{1}{2j+1}+\frac{6}{2}-\sum_{\ell=1}^{j}\frac{(2j)!6^{2\ell}B_{2\ell}}{(2j-2\ell+1)!(2\ell)!}\right)\\
&=\frac{2(2\pi )^{2j+1}(-1)^j}{\sqrt{3}}\left(\frac{3j+1}{(2j+1)!}-3\sum_{\ell=1}^{j}\frac{{6}^{2\ell-1}B_{2\ell}}{(2j-2\ell+1)!(2\ell)!}\right).
\end{aligned}
\end{equation}
Therefore, by  (\ref{SimpleficationSecond3}) and (\ref{SimpleficationSecond4}), the last addend on the right-hand side of  (\ref{SimplificationSecond}) is
\begin{equation}\label{SimpleficationSecond5}
\begin{aligned}
    \frac{3^k2}{\sqrt{3}}
    &\sum_{j=1}^{\lfloor (k-1)/2\rfloor}
    \left(\frac{\pi}{3}\right)^{2j+1}
    (-1)^j\binom{2k-2j-2}{k-1}
    \left(\frac{3j+1}{(2j+1)!}-3\sum_{\ell=1}^{j}\frac{{6}^{2\ell-1}B_{2\ell}}{(2j-2\ell+1)!(2\ell)!}\right).
\end{aligned}
\end{equation}
Altogether, by (\ref{SimplificationSecond})-(\ref{SimpleficationSecond5}), we get our claim.

\qed

\end{Proof}

\bigskip
\begin{Proof}{\ of Proposition \ref{FiniteContinuous1}}
\begin{description}
    \item{$(a)$} 
 The idea is to bound the difference between the distribution functions of $W_{(n)}/n$ and $M$. Since  $F_{W_{(n)}/n}(t)=\prod_{j=1}^n(1-e^{-jt})$,
\begin{equation*}
    F_{W_{(n)}/n}(t)-F_{M}(t)=\prod_{j=1}^n(1-e^{-jt})\left(1-\prod_{j=n+1}^{\infty}(1-e^{-jt})\right).
\end{equation*}
At $t=0$ the difference vanishes. For $t>0$,
$$\prod_{j=n+1}^{\infty}(1-e^{-jt})\ge 1-\sum_{j=n+1}^{\infty}e^{-jt}=1-e^{-(n+1)t}\cdot \frac{1}{1-e^{-t}},
$$
and therefore:
\begin{equation}\label{prop5Label1}
    F_{W_{(n)}/n}(t)-F_{M}(t)\le \prod_{j=1}^n(1-e^{-jt})\cdot  \frac{e^{-(n+1)t}}{1-e^{-t}}=\prod_{j=2}^n(1-e^{-jt})\cdot e^{-(n+1)t}.
\end{equation}Thus
\begin{equation}\label{prop5Label2}
    F_{W_{(n)}/n}(t)-F_{M}(t)\le e^{-nt},\qquad t\ge 0.
\end{equation}

Consider the $k$-th moment of $W_{(n)}$. As $W_{(n)}/n$ is non-negative, by  \citep[p.\:150, (6.3)]{Feller1970} and~(\ref{prop5Label2})
\begin{equation*}\label{prop5Label3a}
\begin{aligned}
  E\left(\frac{W_{(n)}^k}{n^k}\right)&=\int_0^{\infty}kt^{k-1}\left(1-F_{X_{(n)}}(t)\right)dt\\
  &=\int_0^{\infty}kt^{k-1}\left(1-F_{M}(t)+O(e^{-nt})\right)dt\\
  &=E(M^k)+k\int_0^{\infty}t^{k-1}\cdot O(e^{-nt})dt.
\end{aligned}
\end{equation*}
Thus, using \citep[p.\:250, (1)]{Apostol}, 
\begin{equation}\label{prop5Label3}
\begin{aligned}
  E(W_{(n)}^k)&=n^kE(M^k)+n^k\cdot O\left(\frac{k\Gamma(k)}{n^k}\right)=n^kE(M^k)+ O\left(1\right).
\end{aligned}
\end{equation}
\item{$(b)$}
By \citep[Thm. 4]{Brennan}, for every $k$
\begin{equation*}
    \begin{aligned}
     \lim_{n\to\infty}\frac{E(D^k_{(n)})}{n^k}&=k!2^k\sum_{m=1}^{\infty}(-1)^{m+1}\left(\frac{1}{m^k(3m-1)^k}+\frac{1}{m^k(3m+1)^k}\right).
    \end{aligned}
\end{equation*}
Thus, (\ref{theorem4label3}) implies that
$$
E(D^k_{(n)})=n^k E(M^k)+o(n^k).
$$  
\end{description}

\qed

\end{Proof}

\bigskip

\section{\large{The asymptotic distribution  of the maximal waiting time}\label{section5}}

In this section we prove Theorem \ref{estimatePmTheorem}. We have to estimate the probability $P(X_{m}=M)$, which we will denote for brevity by   $p_m$. By independence 
\begin{equation}\label{beta_m label0}
\begin{aligned}
    p_m
    &=P\Big(\max\lbrace{X_1,\ldots,X_{m-1},X_{m+1},\ldots\rbrace}\le X_m\Big)\\
    &=\int \limits_{0}^{\infty}me^{-my}\prod_{j\ne m}^{\infty}\left(1-e^{-{j}y}\right)dy
    =\int \limits_{0}^{\infty}me^{-my}\frac{1}{1-e^{-my}}\prod_{j=1}^{\infty}\left(1-e^{-{j}y}\right)dy\\
    &=\int \limits_{1}^{0}\frac{mx^{m}}{1-x^{m}}\prod_{j=1}^{\infty}\left(1-x^{j}\right)(-x^{-1})dx=\int \limits_{0}^{1}\frac{mx^{m-1}}{1-x^{m}}\prod_{j=1}^{\infty}\left(1-x^{j}\right)dx\\
    &=\int \limits_{0}^{1}mx^{m-1}\frac{f(x)}{1-x^{m}}dx,
\end{aligned}
\end{equation}
where  $f:[0,1]\to\bf{R}$ is Euler's function, given by:
$$
f(x)=\prod_{j=1}^{\infty}\left(1-x^{j}\right),\quad 0\le x\le 1.
$$
We will estimate the right-hand side of (\ref{beta_m label0}) by splitting the interval $[0,1]$ into $5$ sub-intervals,~$\left[x_{j-1},x_{j}\right]$, $1\le j\le 5$, and estimating the integral on each sub-interval separately. The endpoints of these intervals are: 
\begin{equation}\label{Theorem9_x}
\begin{aligned}x_0&={0},\\ x_1&={1-c_1/\sqrt{m}},\\ x_2&={1-b/\sqrt{m}-m^{-2/3}},\\ x_3&={1-b/\sqrt{m}+m^{-2/3}},\\ x_4&={1-c_2/\sqrt{m}},\\x_5&=1,
\end{aligned}
\end{equation}where $b=\pi/\sqrt{6}$ and $c_1,c_2$ are constants with  $c_1>2b$ and $0<c_2<b/6$. We estimate the first integral trivially:
\begin{equation}\label{beta_m label2}
    \begin{aligned}
     \int \limits_{0}^{1-c_1/\sqrt{m}}mx^{m-1}\frac{f(x)}{1-x^{m}}dx\le \int \limits_{0}^{1-c_1/\sqrt{m}}mx^{m-1}dx=\left(1-\frac{c_1}{\sqrt{m}}\right)^{m}\le e^{-c_1 \sqrt{m}}.
    \end{aligned}
\end{equation}
The other four integrals are trickier. 
In these integrals we  will replace~$f$ by an estimate thereof  at $1^{-}$. As it turns out, the function~$f$ is a classical object in number theory, as its inverse
     $${\frac  {1}{f(x)}}=\prod_{m=1}^{\infty}\frac{1}{1-x^m}=\sum _{{m=0}}^{\infty }p(m)x^{m}$$
is the generating function of   $p(m)$. In their study   of the partition function,  Hardy and Ramanujan \citep{Hardy} gave the following estimate,  as $x\to 1^-$ (sea also  \citep[p.\:576, (68)]{FlajoletSedgewick2009}): 
\begin{equation}\label{HR}
f(x)= \sqrt{\frac{2\pi}{1-x}}e^{-\tfrac{\pi^2}{6(1-x)}+\tfrac{\pi^2}{12}}\cdot (1+o(1)).
\end{equation}
In the proof of Theorem \ref{estimatePmTheorem} we shall need the next lemma:
\begin{lemma}\label{MaximumOfIm}
For arbitrary fixed positive integer $m$, let  $J_m:[0,1]\to\bf{R}$ be given by:
\begin{equation}\label{LemmaFunction Jm}
J_m(y)=
\begin{cases}
0,& \qquad y=0,\\ \\
\dfrac{(1-y)^{m-1}}{\sqrt{{y}}}e^{-\tfrac{b}{y}}, & \qquad 0< y\le 1,\:(b>0).
\end{cases}
\end{equation}
\begin{description}
\item{(a)} $J_m$ gets its maximum at the point 
\begin{equation}\label{Maclaures Im label0}
    \begin{aligned}
    y_0(m)=\frac{-1/2-b+ \sqrt{4bm+b^2-5b+1/4}}{2m-3}.
     \end{aligned}
\end{equation}In particular, $y_0(m)=\sqrt{b/m}+O(1/m)$ as $m\to\infty$.
\item{(b)} $J_m$ is increasing in $\left[0,y_0(m)\right]$ and decreasing in $\left[y_0(m),1\right]$.
\end{description}
\end{lemma}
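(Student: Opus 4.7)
The plan is to analyze the logarithmic derivative of $J_m$ on the open interval $(0,1)$, where $J_m$ is smooth and strictly positive. Taking logarithms in the definition gives
\[
\log J_m(y) = (m-1)\log(1-y) - \tfrac12\log y - \tfrac{b}{y},
\]
whence
\[
\frac{J_m'(y)}{J_m(y)} \;=\; -\frac{m-1}{1-y} - \frac{1}{2y} + \frac{b}{y^2}.
\]
Multiplying through by the positive quantity $2y^2(1-y)$ shows that on $(0,1)$ the sign of $J_m'(y)$ coincides with the sign of the quadratic polynomial
\[
Q(y) \;=\; -(2m-3)\,y^2 \,-\, (1+2b)\,y \,+\, 2b.
\]

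For $m\ge 2$, $Q$ opens downward and satisfies $Q(0)=2b>0$, so its two real roots have opposite sign. Applying the quadratic formula, after simplifying the discriminant $(1+2b)^2+8b(2m-3)=4\bigl(4bm+b^2-5b+1/4\bigr)$, the unique positive root is
\[
y_0(m) \;=\; \frac{-(1/2+b) + \sqrt{4bm+b^2-5b+1/4}}{2m-3},
\]
which is exactly the formula asserted in (a). (For $m=1$ the quadratic opens upward; a direct check then shows that $J_1$ is monotonically increasing on $(0,1)$, consistent with the stated formula.)

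For part (b), the sign analysis above gives $Q(y)>0$ on $(0,y_0(m))$ and $Q(y)<0$ on $(y_0(m),1)$, since $y_0(m)$ is the only positive root of $Q$. Consequently $J_m$ is strictly increasing on $(0,y_0(m))$ and strictly decreasing on $(y_0(m),1)$. One needs $y_0(m)<1$ for this picture to be meaningful on all of $(0,1)$; this is automatic for large $m$ by the asymptotic derived below, and a short direct check handles the remaining small values using the concrete value $b=\pi/\sqrt 6$.

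Finally, for the asymptotic I would factor $2\sqrt{bm}$ out of the square root,
\[
\sqrt{4bm+b^2-5b+\tfrac14} \;=\; 2\sqrt{bm}\,\bigl(1+O(1/m)\bigr),
\]
and combine with $1/(2m-3) = 1/(2m)+O(m^{-2})$ to obtain
\[
y_0(m) \;=\; \frac{2\sqrt{bm}}{2m-3} + O(1/m) \;=\; \sqrt{b/m} + O(1/m).
\]
No step is conceptually delicate; the only care required is in the sign bookkeeping when clearing denominators to produce $Q$, and in tracking the correct sign in the quadratic formula so that one picks out the positive root.
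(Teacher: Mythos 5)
Your argument is correct and follows essentially the same route as the paper: take the logarithmic derivative, solve the resulting quadratic for its unique root in $(0,1)$, and read off the monotonicity from the sign of the derivative (the paper infers part (b) more tersely from positivity of $J_m$ on $(0,1)$ together with $J_m(0)=J_m(1)=0$, and your check $Q(1)=-2(m-1)<0$ in fact gives $y_0(m)<1$ for every $m\ge 2$, not just large $m$). One tiny caveat: for $m=1$ and general $b>0$ the function $J_1$ need not be increasing on all of $(0,1)$ — it peaks at $y=2b$ when $b<1/2$ — but this is immaterial here since the formula for $y_0(1)$ remains consistent and the application uses $b=\pi^2/6>1/2$.
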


\begin{proof}
\begin{description}
\item{(a)} One easily checks that the function is continuous on $[0,1]$. In the open interval we have:
\begin{equation*}
    \begin{aligned}
     \ln J_m(y)={(m-1)}\ln (1-y)-\tfrac{1}{2}\ln y-\frac{b}{y}, \qquad 0<y<1.
    \end{aligned}
\end{equation*}
Thus
\begin{equation}
    \begin{aligned}
     \frac{d}{d y}\ln J_m(y)=-\frac{m-1}{1-y}-\frac{1}{2y}+\frac{b}{y^2}.
    \end{aligned}
\end{equation}
Equating the right-hand side to $0$, we get
$$
y_{1,2}=\frac{-1/2-b\pm \sqrt{4bm+b^2-5b+1/4}}{2m-3}.
$$
As $y_0(m)\in(0,1)$, we obtain (\ref{Maclaures Im label0}).
\item{(b)}  Since $J_m$ is positive in $(0,1)$, and  $J_m(0)=J_m(1)=0$, the claim follows from the preceding part.
\end{description}
\end{proof}

\begin{Proof}{\ of Theorem \ref{estimatePmTheorem}} Denote: 
$$\beta_j(m)=\int\limits_{x_{j-1}}^{x_j}mx^{m-1}\frac{f(x)}{1-x^{m}}dx,\qquad 1\le j\le 5,
$$where the $x_j$'s are defined in (\ref{Theorem9_x}).
Thus, by (\ref{beta_m label2}) we have  $\beta_1(m)\le e^{-c_1\sqrt{m}}$.
Let $J_m$ be as in Lemma \ref{MaximumOfIm}. Denote
$$
I_m(x)=m\sqrt{2\pi}e^{\pi^2/12}J_m(1-x)=mx^{m-1} \sqrt{\frac{2\pi}{1-x}}e^{-\tfrac{\pi^2}{6(1-x)}+\tfrac{\pi^2}{12}}
$$
with $b$ from (\ref{LemmaFunction Jm}) taken as $\pi^2$. Then:
\begin{equation}\label{beta_m label33}
    \begin{aligned}
     \beta_2(m)&=\int\limits_{1-c_1/\sqrt{m}}^{1-b/\sqrt{m}-m^{-2/3}}\frac{mx^{m-1}}{1-x^m}\sqrt{\frac{2\pi}{1-x}}e^{-\tfrac{\pi^2}{6(1-x)}+\tfrac{\pi^2}{12}}\cdot (1+o(1))dx\\
     &=\int\limits_{1-c_1/\sqrt{m}}^{1-b/\sqrt{m}-m^{-2/3}}\frac{ I_m(x)}{1-x^m}\cdot (1+o(1))dx.
    \end{aligned}
\end{equation}
For $ x\le 1-b/\sqrt{m}-m^{-2/3}$
\begin{equation}\label{beta_m label 34}
    1-x^m\ge 1-\left(1-\frac{b}{\sqrt{m}}-\frac{1}{m^{-2/3}}\right)^m\ge 1-e^{-b\sqrt{m}}=1-o(1).
\end{equation}
By Lemma \ref{MaximumOfIm} 
\begin{equation}\label{beta_m label35}
    \begin{aligned}
     \beta_2(m)&\le\frac{c_1-b}{\sqrt{m}}\cdot I_m\left(1-\frac{b}{\sqrt{m}}-\frac{1}{m^{2/3}}\right).
    \end{aligned}
\end{equation}
Now:
\begin{equation}\label{beta2m label1}
    \begin{aligned}
     \ln  I_m\left(1-\frac{b}{\sqrt{m}}-\frac{1}{m^{2/3}}\right)
     =&\:\ln m + (m - 1)\ln \left(1-\frac{b}{\sqrt{m}}-\frac{1}{m^{2/3}}\right)+\frac{1}{2}\ln(2\pi)\\ &-\:\frac{1}{2}\ln\left(1-\left(1-\frac{b}{\sqrt{m}}-\frac{1}{m^{2/3}}\right)\right)+ \frac{\pi^2}{12} \\
     &-\: \frac{{\pi^2}/{6}}{1-\left(1-b/\sqrt{m}-1/m^{2/3}\right)}.
    \end{aligned}
\end{equation}
Let us we estimate the (non-immediate) terms on the right-hand side of \Eq{beta2m label1}. For the second term:
\begin{equation}\label{beta2m label2}
    \begin{aligned}
     \ln \left(1-\frac{b}{\sqrt{m}}-\frac{1}{m^{2/3}}\right)
     &=-\frac{b}{\sqrt{m}}-\frac{1}{m^{2/3}}-\frac{1}{2}\left(\frac{b}{\sqrt{m}}+\frac{1}{m^{2/3}}\right)^2+O(m^{-3/2})\\
     &=-\frac{b}{\sqrt{m}}-\frac{1}{m^{2/3}}-\frac{b^2}{2m}+O(m^{-7/6}).
    \end{aligned}
\end{equation}
The fourth term:
\begin{equation}\label{beta2m label3}
    \begin{aligned}
     \ln\left(\frac{b}{\sqrt{m}}+\frac{1}{m^{2/3}}\right)
     &=\ln\frac{b}{\sqrt{m}}+\ln\left(1+\frac{1}{b{m}^{1/6}}\right)
     =-\frac{1}{2}\ln m+\ln b+O(m^{-1/6}).
    \end{aligned}
\end{equation}
The last term:
\begin{equation}\label{beta2m label4}
    \begin{aligned}
     \frac{{b^2}}{b/\sqrt{m}+1/m^{2/3}}
     &=\frac{{b\sqrt{m}}}{1+1/{(b{m}^{1/6}})}\\
     &=b{\sqrt{m}}\left(1-\frac{1}{b{m}^{1/6}}+\frac{1}{b^2{m}^{1/3}}-\frac{1}{b^3\sqrt{m}}+O(m^{-2/3})\right)\\
     &=b{\sqrt{m}}-{{m}^{1/3}}+\frac{{m}^{1/6}}{b}-\frac{1}{b^2}+O(m^{-1/6}).
    \end{aligned}
\end{equation}
Altogether, by (\ref{beta2m label1})-(\ref{beta2m label4}):
\begin{equation}\label{beta2m label5}
    \begin{aligned}
    \ln  I_m\left(1-\frac{b}{\sqrt{m}}-\frac{1}{m^{2/3}}\right)=&\:\ln m + (m - 1) \left(-\frac{b}{\sqrt{m}}-\frac{1}{m^{2/3}}-\frac{b^2}{2m}+O(m^{-7/6})\right)\\ 
    &+\:\frac{1}{2}\ln(2\pi)-\frac{1}{2}\left(-\frac{1}{2}\ln m+\ln b+O(m^{-1/6})\right)+ \frac{\pi^2}{12}\\
    &-\ \left(b{\sqrt{m}}-{{m}^{1/3}}+\frac{{m}^{1/6}}{b}-\frac{1}{b^2}+O(m^{-1/6})\right)\\
     =&\:\frac{5}{4}\ln m-2b\sqrt{{m}}-\frac{m^{1/6}}{b}-\frac{\ln b}{2}+\frac{\ln (2\pi)}{2}+\frac{1}{b^2}+O(m^{-1/6}).
    \end{aligned}
\end{equation}
\bigskip 
By \Eq{beta_m label35} and (\ref{beta2m label5}):
\begin{equation}\label{beta2m label6}
    \begin{aligned}
     \beta_2(m)&\le\frac{c_1-b}{\sqrt{m}}\cdot  e^{\tfrac{5}{4}\ln m-2b\sqrt{{m}}-\tfrac{m^{1/6}}{b}+o({m^{1/6}})}
     =e^{-2b\sqrt{{m}}-\tfrac{m^{1/6}}{b}+o({m^{1/6}})}=o\left(e^{-2b\sqrt{m}}\right).
    \end{aligned}
\end{equation}

Consider $\beta_3(m)$. Substituting $x=1-b/\sqrt{m}+s/m^{3/4}$ and  using (\ref{beta_m label 34}),  we get
\begin{equation}\label{beta_m label4}
    \begin{aligned}
     \beta_3(m)&=\int\limits_{-m^{1/12}}^{m^{1/12}}{ I_m\left(1-\frac{b}{\sqrt{m}}+\frac{s}{m^{3/4}}\right)}\cdot m^{-3/4}\cdot (1+o(1))ds.
     \end{aligned}
\end{equation}Denoting
$$
\tilde{I}_m(s)={ I_m\left(1-\frac{b}{\sqrt{m}}+\frac{s}{m^{3/4}}\right)},\qquad {-m^{1/12}}\le s\le{m^{1/12}},
$$we have
\begin{equation}\label{beta_m label3}
    \begin{aligned}
     \ln \tilde{I}_m(s)
     =&\:\ln m + (m - 1)\ln \left(1-\frac{b}{\sqrt{m}}+\frac{s}{m^{3/4}}\right)+\frac{1}{2}\ln(2\pi)\\ &-\:\frac{1}{2}\ln\left(1-\left(1-\frac{b}{\sqrt{m}}+\frac{s}{m^{3/4}}\right)\right)+ \frac{\pi^2}{12} - \frac{{b^2}}{1-\left(1-b/\sqrt{m}+s/m^{3/4}\right)}.
    \end{aligned}
\end{equation}
We estimate the terms on the right-hand side of \Eq{beta_m label3}. For the second term:
\begin{equation}\label{Maclaures Im label2}
    \begin{aligned}
     \ln \left(1-\frac{b}{\sqrt{m}}+\frac{s}{m^{3/4}}\right)
     &=-\frac{b}{\sqrt{m}}+\frac{s}{m^{3/4}}-\frac{1}{2}\left(\frac{b}{\sqrt{m}}-\frac{s}{m^{3/4}}\right)^2+O(m^{-3/2})\\
     &=-\frac{b}{\sqrt{m}}+\frac{s}{m^{3/4}}-\frac{b^2}{2m}+O(m^{-7/6}).
    \end{aligned}
\end{equation}
The fourth term:
\begin{equation}\label{Maclaures Im label3}
    \begin{aligned}
     \ln\left(\frac{b}{\sqrt{m}}-\frac{s}{m^{3/4}}\right)
     &=\ln\frac{b}{\sqrt{m}}+\ln\left(1-\frac{s}{b{m}^{1/4}}\right)\\
     &=-\frac{1}{2}\ln m+\ln b+O(m^{-1/6}).
    \end{aligned}
\end{equation}
The last term:
\begin{equation}\label{Maclaures Im label4}
    \begin{aligned}
     \frac{{b^2}}{b/\sqrt{m}-s/m^{3/4}}
     &=\frac{{b\sqrt{m}}}{1-{s}/{(b{m}^{1/4})}}\\
     &=b\sqrt{m}\left(1+\frac{s}{b{m}^{1/4}}+\frac{s^2}{b^2\sqrt{m}}+\frac{s^3}{b^3{m}^{3/4}}+O(m^{-2/3})\right)\\
     &=b\sqrt{m}+{s{m}^{1/4}}+\frac{s^2}{b}+\frac{s^3}{b^2{m}^{1/4}}+O(m^{-1/6}).
    \end{aligned}
\end{equation}
Altogether, by (\ref{beta_m label3})-(\ref{Maclaures Im label4})
\begin{equation}\label{Maclaures Im label5}
    \begin{aligned}
    \ln \tilde{I}_m(s)=&\:\ln m + (m - 1) \left(-\frac{b}{\sqrt{m}}+\frac{s}{m^{3/4}}-\frac{b^2}{2m}+O(m^{-7/6})\right)+\frac{1}{2}\ln(2\pi)\\ &-\:\frac{1}{2}\left(-\frac{1}{2}\ln m+\ln b+O(m^{-1/6})\right)+ \frac{\pi^2}{12}\\
    &-\: \left(b\sqrt{m}+{s{m}^{1/4}}+\frac{s^2}{b}+\frac{s^3}{b^2{m}^{1/4}}+O(m^{-1/6})\right)\\
     =&\: -2b\sqrt{{m}}+\frac{5}{4}\ln m+\frac{1}{4}\ln 24-\frac{s^2}{b} -\frac{s^3}{b^2{m}^{1/4}}+O(m^{-1/6}).
    \end{aligned}
\end{equation}
\bigskip 
By \Eq{beta_m label4}  and (\ref{Maclaures Im label5}):
\begin{equation*}
    \begin{aligned}
     \beta_3(m)&=\int\limits_{-m^{1/12}}^{m^{1/12}}m^{-3/4}e^{\ln \tilde{I}_m(s)}\cdot (1-o(1))ds\\
     &=\sqrt{m}e^{-2b\sqrt{m}}24^{1/4}\int\limits_{-m^{1/12}}^{m^{1/12}}e^{-\tfrac{s^2}{b}-\tfrac{s^3}{b^2m^{1/4}}}\cdot (1-o(1))ds,
    \end{aligned}
\end{equation*}Now,     
\begin{equation*}
\begin{aligned}
\int\limits_{-m^{1/12}}^{m^{1/12}}e^{-\tfrac{s^2}{b}-\tfrac{s^3}{b^2m^{1/4}}}ds\xrightarrow[m\to\infty]{}
\int\limits_{-\infty}^{\infty}e^{-\tfrac{s^2}{b}}ds=\sqrt{b\pi}=\frac{\pi}{6^{1/4}}.
\end{aligned}
\end{equation*}Thus,
\begin{equation}\label{beta_m label5}
    \begin{aligned}
     \beta_3(m)&=\sqrt{m}e^{-2b\sqrt{m}}24^{1/4}\cdot \frac{\pi}{6^{1/4}}\cdot (1+o(1))
     =\pi\sqrt{2m}e^{-2b\sqrt{m}}\cdot (1+o(1)).
    \end{aligned}
\end{equation}

We  bound $\beta_4(m)$  in the same way as we have bounded $\beta_2(m)$ to obtain: 
\begin{equation}\label{beta4m label1}
    \begin{aligned}
     \beta_4(m)&\le e^{-2b\sqrt{{m}}-\tfrac{m^{1/6}}{b}+o({m^{1/6}})}=o\left(e^{-2b\sqrt{m}}\right).
    \end{aligned}
\end{equation}

For $\beta_5(m)$, similarly to \Eq{beta_m label33},
\begin{equation*}
    \begin{aligned}
     \beta_5(m)&=\int \limits_{1-c_2/\sqrt{m}}^{1}mx^{m-1}\frac{f(x)}{1-x^m}dx
    = \int \limits_{1-c_2/\sqrt{m}}^{1}\frac{mx^{m-1}}{1-x^m}\sqrt{\frac{2\pi}{1-x}}e^{-\tfrac{\pi^2}{6(1-x)}+\tfrac{\pi^2}{12}}\cdot (1+o(1))dx\\
    &\le \int \limits_{1-c_2/\sqrt{m}}^{1}\frac{m}{1-x}\sqrt{\frac{2\pi}{1-x}}e^{-\tfrac{{\pi^2}/{12}}{1-x}}\cdot (1+o(1))dx.
    \end{aligned}
\end{equation*}Substituting $t=1/(1-x)$ and recalling that $c_2<\pi/(6\sqrt{6})$, we obtain:
\begin{equation}\label{beta_m label16}
    \begin{aligned}
     \beta_5(m)&\le \int \limits_{\sqrt{m}/c_2}^{\infty}\frac{m\sqrt{2\pi}t^{3/2}}{t^2}e^{-{\tfrac{\pi^2}{12}}t}\cdot (1+o(1))dt
    = m\sqrt{2\pi}\int \limits_{\sqrt{m}/c_2}^{\infty}\frac{e^{-{\tfrac{\pi^2}{12}}t}}{\sqrt{t}}\cdot (1+o(1))dt\\
    &= m\sqrt{2\pi}\int \limits_{\sqrt{m}/c_2}^{\infty}\frac{e^{-{\tfrac{\pi^2}{12}}t}}{\sqrt{\sqrt{m}/c_2}}\cdot (1+o(1))dt
    \le O(m^{3/4})\cdot \frac{12}{\pi^2}e^{-{\tfrac{\pi^2}{{12c_2}}}\sqrt{m}}\cdot (1+o(1))\\
    &\le O(m^{3/4})\cdot \frac{12}{\pi^2}e^{-{\tfrac{3\pi}{{\sqrt{6}}}}\sqrt{m}}\cdot (1+o(1)).
    \end{aligned}
\end{equation}
Altogether, by  (\ref{beta_m label2}), (\ref{beta2m label6}) and (\ref{beta_m label5})-\Eq{beta_m label16}, we obtain our claim.

\qed

\end{Proof}
\bigskip

\begin{Proof}{\ of Proposition \ref{FiniteContinuous2}}
\begin{description}
\item{$(a)$}
Denote  $p_{m,n}=P(W_{n-m}=W_{(n)})$. Similarly to the proof of Theorem  \ref{estimatePmTheorem}:
\begin{equation*}
\begin{aligned}
    p_{m,n}&=\left(W_{n-m}=W_{(n)}\right)=\left(X_{n-m}=\max\lbrace{X_0,\ldots,X_{n-m-1},X_{n-m+1},\ldots,X_{n-1}\rbrace}\right)\\
    &=\int \limits_{0}^{\infty}me^{-my}\frac{1}{1-e^{-my}}\prod_{j=1}^{n}\left(1-e^{-{j}y}\right)dy.
\end{aligned}
\end{equation*}
Similarly to the proof of Proposition \ref{FiniteContinuous1}, we shall be interested in the difference between the distribution functions $F_{W_{(n)}/n}(t)$ and $F_M(t)$, which we  denote by $r(t)$. We have
\begin{equation*}
\begin{aligned}
    p_{m,n}&=\int \limits_{0}^{\infty}me^{-my}\frac{1}{1-e^{-my}}\left(F_M(y)+r(y)\right)dy\\
    &=P(X_{m}=M) +\int \limits_{0}^{\infty}me^{-my}\frac{1}{1-e^{-my}}r(y)dy.
\end{aligned}
\end{equation*}
Denote:
\begin{equation*}
\begin{aligned}
    \varepsilon_m&=\int \limits_{0}^{\infty}me^{-my}\frac{1}{1-e^{-my}}r(y)dy.
\end{aligned}
\end{equation*}By (\ref{prop5Label1}), for $m>1$ 
\begin{equation*}
\begin{aligned}
    \varepsilon_m&\le \int \limits_{0}^{\infty}me^{-my}\frac{1}{1-e^{-my}}\prod_{j=2}^n(1-e^{-jy})\cdot e^{-(n+1)y}dy\\
    &= \int \limits_{0}^{\infty}me^{-my}\prod_{j=2,j\ne m}^n(1-e^{-jy})\cdot e^{-(n+1)y}dy
    \le \int \limits_{0}^{\infty}m e^{-(m+n+1)y}dy=\frac{m}{m+n+1}.
\end{aligned}
\end{equation*}
For $m=1$
\begin{equation*}
    \begin{aligned}
      \varepsilon_1&\le \int \limits_{0}^{\infty}e^{-y}\frac{1}{1-e^{-y}}\prod_{j=2}^n(1-e^{-jy})\cdot e^{-(n+1)y}dy\\&
    = \int \limits_{0}^{\infty}e^{-y}\left(1+e^y\right)\prod_{j=3}^n(1-e^{-jy})\cdot e^{-(n+1)y}dy\\
    &\le \int \limits_{0}^{\infty} \left(e^{-(n+2)y}+ e^{-(n+1)y}\right)dy\le \frac{2}{n+1}.
    \end{aligned}
\end{equation*}
\item{$(b)$}
Denote  
$$p'_{m,n}=P\left(D_{n-m}>\max\lbrace{D_0,\ldots,D_{n-m-1},D_{n-m+1},\ldots,D_{n-1}\rbrace}\right).$$ 
We have:
\begin{equation*}
\begin{aligned}
    p'_{m,n}&=\sum_{j=1}^{\infty}P(D_{n-m}=j)\cdot P\left(\max\lbrace{D_0,\ldots,D_{n-m-1},D_{n-m+1},\ldots,D_{n-1}\rbrace}<j\right)\\
    &=\sum_{j=1}^{\infty}\left(1-\frac{m}{n}\right)^{j-1}\frac{m}{n}\prod_{i=0,i\ne n-m}^{n-1}P\left(D_i<j\right)\\
    &=\sum_{j=1}^{\infty}\left(1-\frac{m}{n}\right)^{j-1}\frac{m}{n}\prod_{i=0,i\ne n-m}^{n-1}\left(1-\left(1-\frac{n-i}{n}\right)^{j-1}\right)\\
     &=\sum_{j=1}^{\infty}\left(1-\frac{m}{n}\right)^{j-1}\frac{m}{n}\prod_{i=1,i\ne m}^{n}\left(1-\left(1-\frac{i}{n}\right)^{j-1}\right)\\
    &=\frac{1}{n}\sum_{j=0}^{\infty}m\left(1-\frac{m}{n}\right)^{j}\prod_{i=1,i\ne m}^{n}\left(1-\left(1-\frac{i}{n}\right)^{j}\right)\\
    &=\lim_{\gamma\to\infty}\frac{1}{n}\sum_{j=0}^{\gamma n}m\left(1-\frac{m}{n}\right)^{j}\prod_{i=1,i\ne m}^{n}\left(1-\left(1-\frac{i}{n}\right)^{j}\right).
\end{aligned}
\end{equation*}
Now,
\begin{equation*}
    \begin{aligned}
     m\left(1-\frac{m}{n}\right)^{j}\prod_{i=1,i\ne m}^{n}\left(1-\left(1-\frac{i}{n}\right)^{j}\right)
     -me^{-m\cdot\tfrac{j}{n}}\prod_{i=1,i\ne m}^{n}\left(1-e^{-i\cdot\tfrac{j}{n}}\right)\xrightarrow[n\to\infty]{}0,
    \end{aligned}
\end{equation*}
where the convergence is uniform over $j\in[0,\gamma n]$. It follows that:
$$
\frac{1}{n}\sum_{j=0}^{\gamma n}m\left(1-\frac{m}{n}\right)^{j}\prod_{i=1,i\ne m}^{n}\left(1-\left(1-\frac{i}{n}\right)^{j}\right)\xrightarrow[n\to\infty]{}
\int\limits_{0}^{\gamma}me^{-mx}\prod_{i=1,i\ne m}^{\infty}\left(1-e^{-ix}\right)dx.
$$
Therefore 
\begin{equation*}
\begin{aligned}
    \lim_{n\to\infty}p'_{m,n}
    = \int\limits_{0}^{\infty}me^{-mx}\prod_{i=1,i\ne m}^{\infty}\left(1-e^{-ix}\right)dx=P(X_{m}=M).
\end{aligned}
\end{equation*}
\end{description}
\qed

\end{Proof}
\newpage

\bibliography{refs}
\bibliographystyle{abbrv}

\end{document}